\theoremstyle{plain}
\newtheorem{thm}{Theorem}[section]
\newtheorem{conj}{Conjecture}[section]
\newtheorem{question}{Question}[section]
\newtheorem{lemma}[thm]{Lemma}
\newtheorem*{thm*}{Theorem}
\newtheorem*{prop*}{Proposition}
\newtheorem{prop}[thm]{Proposition}
\newtheorem{cor}[thm]{Corollary}
\theoremstyle{definition}
\newtheorem{defi}[thm]{Definition}
\theoremstyle{remark}
\newtheorem{rmk}{Remark}[section]
\newtheorem{rmks}{Remarks}[section]
\numberwithin{equation}{section}
\newcommand{\FF}{\mathbf{F}}
\DeclareMathOperator{\Br}{Br}
\DeclareMathOperator{\End}{End}
\DeclareMathOperator{\Pic}{Pic}
\DeclareMathOperator{\Hom}{Hom}
\DeclareMathOperator{\Spec}{Spec}
\DeclareMathOperator{\Id}{Id}
\DeclareMathOperator{\GL}{GL}
\DeclareMathOperator{\SO}{SO}
\DeclareMathOperator{\OO}{O}
\newcommand{\id}{\operatorname{Id}}
\newcommand{\Hdg}{\operatorname{Hdg}}
\newcommand{\Aut}{\operatorname{Aut}}
\newcommand{\NS}{\operatorname{NS}}
\newcommand{\Z}{\mathbf{Z}}
\newcommand{\Q}{\mathbf{Q}}
\newcommand{\R}{\mathbb{R}}
\newcommand{\Pp}{\mathbb{P}}
\newcommand{\A}{\mathbf{A}}
\newcommand{\D}{\mathcal{D}}
\newcommand{\F}{\mathcal{F}}
\newcommand{\C}{\mathbb{C}}
\newcommand{\hZ}{\widehat{\Z}}
\newcommand{\adjunction}[4]{\xymatrix@1{#1{\ } \ar@<-0.3ex>[r]_{ {\scriptstyle #2}} & {\ } #3 \ar@<-0.3ex>[l]_{ {\scriptstyle #4}}}}
\newcommand*{\da@rightarrow}{\mathchar"0\hexnumber@\symAMSa 4B }
\newcommand*{\da@leftarrow}{\mathchar"0\hexnumber@\symAMSa 4C }
\newcommand*{\xdashrightarrow}[2][]{%
  \mathrel{%
    \mathpalette{\da@xarrow{#1}{#2}{}\da@rightarrow{\,}{}}{}%
  }%
}
\newcommand{\xdashleftarrow}[2][]{%
  \mathrel{%
    \mathpalette{\da@xarrow{#1}{#2}\da@leftarrow{}{}{\,}}{}%
  }%
}
\newcommand*{\da@xarrow}[7]{%
  % #1: below
  % #2: above
  % #3: arrow left
  % #4: arrow right
  % #5: space left 
  % #6: space right
  % #7: math style 
  \sbox0{$\ifx#7\scriptstyle\scriptscriptstyle\else\scriptstyle\fi#5#1#6\m@th$}%
  \sbox2{$\ifx#7\scriptstyle\scriptscriptstyle\else\scriptstyle\fi#5#2#6\m@th$}%
  \sbox4{$#7\dabar@\m@th$}%
  \dimen@=\wd0 %
  \ifdim\wd2 >\dimen@
    \dimen@=\wd2 %   
  \fi
  \count@=2 %
  \def\da@bars{\dabar@\dabar@}%
  \@whiledim\count@\wd4<\dimen@\do{%
    \advance\count@\@ne
    \expandafter\def\expandafter\da@bars\expandafter{%
      \da@bars
      \dabar@ 
    }%
  }%  
  \mathrel{#3}%
  \mathrel{%   
    \mathop{\da@bars}\limits
    \ifx\\#1\\%
    \else
      _{\copy0}%
    \fi
    \ifx\\#2\\%
    \else
      ^{\copy2}%
    \fi
  }%   
  \mathrel{#4}%
}
\title{Rational points in the Noether-Lefschetz locus of moduli spaces of K3 surfaces}
\author{Domenico Valloni}
\begin{document}

\maketitle

\begin{abstract}
In this paper, we study maps between moduli spaces of lattice-polarized K3 surfaces induced by sublattices of prime index. We show that these maps can be used to determine if a rational point of the moduli space belongs to the Noether-Lefschetz locus. As an application, we prove that the Bombieri-Lang conjecture implies non-density statements for the rational points in the Noether-Lefschetz locus, as predicted by a conjecture of Shafarevich.
\end{abstract} 

\section{introduction}
Let $L$ be a lattice of signature $(1_+, n_-)$ that embeds primitively into the K3 lattice. Nikulin \cite{MR544937} introduced the moduli space $\mathcal{F}_L$ of $L$-quasipolarized K3 surfaces, which parametrizes isomorphism classes of pairs $(X, \iota)$, where $X$ is a complex K3 surface and $\iota \colon L \hookrightarrow \NS(X)$ is a primitive embedding satisfying an ampleness condition. The spaces $\mathcal{F}_L$ have been extensively studied in the literature, particularly for specific choices of $L$, and they are normal quasiprojective varieties of dimension $20 - \mathrm{rank}(L)$. They also have a natural model over $\Q$ and are closely related to Shimura varieties. 

In this paper, we are interested in the Noether-Lefschetz locus $\mathcal{F}_L^{nl}$ of $\mathcal{F}_L$, which consists of all the points $(X, \iota)$ for which $\iota \colon L \hookrightarrow \NS(X)$ is a proper inclusion. We remark that determining whether a given point lies in $\mathcal{F}_L^{nl}$ amounts to know something about $\NS(X)$, which is in general a difficult problem (see e.g., \cite{vanluijk2007}, \cite{zbMATH05279288} and \cite{zbMATH06322070}). To distinguish points in $\mathcal{F}_L^{nl}$ we use the negative definite lattice $\NS_{\iota}(X) \coloneqq \iota(L)^{\perp}$, and we remark that there are only finitely many subvarieties of $\mathcal{F}_L$ whose very general point is a K3 surface $(X, \iota)$ with a given $\NS_{\iota}(X)$.

Let now $K$ be a number field, and let $\mathcal{F}_L^{nl}(K)$ denote the set of $K$-rational points lying in the Noether-Lefschetz locus. According to \cite[Thm.5.2.]{PMIHES_1996__83__5_0}, the subset $\mathcal{F}_L^{nl}(K)$ is always thin. This is in fact proved by Andr\'{e} for general families of motives, with a key ingredient being a version of Hilbert irreducibility for infinite extensions proved in \cite{zbMATH00042767}. In this work, we shall use related ideas to describe the set $\mathcal{F}_L^{nl}(K)$. To state our main result, for any prime $p$ coprime to $\mathrm{discr}(L)$, we construct three branched coverings $\pi_p^{\bullet} \colon \mathcal{F}^{\bullet}_L(p) \rightarrow \mathcal{F}_L$, where $\bullet \in \{+,-,0 \}$. Each covering corresponds to an isometry class of a sublattice $T' \subset T$ such that $T/T' \cong \Z/p \Z$, and if $4 \leq \mathrm{rank}(L) \leq 18$, then each $\mathcal{F}_L^\bullet(p)$ is a moduli space of lattice polarized K3 surfaces (in general, at least one of them always is). Under some mild conditions on $L$ the spaces $\mathcal{F}_L^\bullet(p)$ also have one or two components, and $\deg(\pi_p^\bullet) \sim p^{22- \mathrm{rank}(L)}$ on each of them. Finally, one defines $\OO_\Delta(\NS_{\iota}(X)) \subset \OO(\NS_{\iota}(X))$ as the subgroup which respects effective $(-2)$-classes in $\NS_{\iota}(X)$. We note that $|\OO(\NS_{\iota}(X))| \leq 3^{21^3}$ always, and in practice, $\OO_{\Delta}(\NS_{\iota}(X)) \subset \mu_2$ for most of the lattices. We can now state our main result:

\begin{thm} \label{main thm}
Let $L$ be an hyperbolic lattice which is not $2$-elementary, and let $U \subset \mathcal{F}_L$ be the Zariski open which supports a universal family (see Proposition \ref{prop univ family}). Let $K$ be a number field and let $x \in U(K)$ be a rational point representing $(X, \iota)$. Then:

\begin{enumerate}
\item  If $x \in \mathcal{F}_L^{nl}$, there is an extension $K'/K$ with $[K' \colon K] \leq |\OO_\Delta(\NS_\iota(X))|$ and a rational point $x' \in \mathcal{F}^\bullet_{L}(p)(K')$ such that $\pi_p(x') = x$, at least for one choice $\bullet \in \{+, -,0\}$. 
\item In fact, $x$ is in the Noether-Lefschetz locus if and only if point (1) holds for $p$ sufficiently large;
\item If the cokernel of $\NS_\iota(X) \perp \iota(L) \subset \NS(X)$ is not $2$-torsion and $\OO_\Delta(\NS_\iota(X)) \subset \mu_2$ (which is automatic for instance if $\rho(X) = \mathrm{rank}(L) +1$) then we can choose $K'=K$ in point (1). 
\end{enumerate}
\end{thm}
For a closed point $x \in \mathcal{F}_{L}$ and a closed point $x' \in \mathcal{F}_{L'}$ such that $\pi^{\bullet}_p(x') = x$, one expects in general that $\mathrm{deg}(x') \sim \mathrm{deg}(x) \cdot \mathrm{deg}(\pi^{\bullet}_p)$, which can be made more precise using Hilbert irreducibility. Since the numbers $|\OO_\Delta(K)|$ are universally bounded and the degrees of $\pi_p^\bullet$ are arbitrarily large, this shows that the closed points in $\mathcal{F}_L^{nl}$ behave in a special way with respect to the coverings $\pi_p^\bullet$. We remark that these coverings already appeared in the literature; for instance, they were studied in \cite{MR1255698} when $\mathrm{rank}(L) = 1$.

Point (2) can be made uniform assuming the following conjecture of V\'{a}rilly-Alvarado:
\begin{conj}[\cite{MR3644254}] \label{conj Brauer}
Let $L$ be an even hyperbolic lattice and let $X/K$ be a K3 surface over a number field such that $\NS(\overline{X}) \cong L$. Then, $| \Br(\overline{X})^{G_K} |$ can be bounded only in terms of $[K \colon \Q]$.
\end{conj}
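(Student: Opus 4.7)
Conjecture \ref{conj Brauer} is a well-known open problem, so I outline a plausible strategy rather than a complete argument.

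The starting observation is that for a K3 surface $X/K$ with $\NS(\overline{X}) \cong L$, the transcendental lattice $T = L^{\perp} \subset \Lambda$ is determined up to isometry by $L$, and the torsion part of $\Br(\overline{X})$ is controlled by $T \otimes \Q/\Z$ modulo the image of $\NS$. The Galois action on $\Br(\overline{X})$ is induced from the action on $T \otimes \widehat{\Z}$ coming from \'etale cohomology, so the invariants $\Br(\overline{X})^{G_K}$ are governed by the image of $G_K$ inside the orthogonal group $\OO(T \otimes \widehat{\Z})$. The problem thus becomes: bound, uniformly in the K3 surface $X$, the number of Galois-fixed vectors in $T \otimes \Q/\Z$ in terms of $[K \colon \Q]$.

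My plan is threefold. First, invoke the Mumford-Tate conjecture for K3 surfaces (known in this setting by Tankeev, Andr\'e, and Moonen) to identify the $\ell$-adic Zariski closure of the image of $G_K$ with an open subgroup of the Mumford-Tate group of the Hodge structure on $T$, which depends only on $L$. Second, pass through the Kuga-Satake construction to translate the problem into a statement about the Galois action on the $\ell$-adic Tate modules of a bounded-dimensional abelian variety associated to $X$, where uniform open-image results are better understood. Third, exploit the finite covers $\pi_i \colon \mathcal{F}_i \rightarrow \mathcal{F}_L$ constructed in Theorem \ref{thm main}: a $G_K$-fixed element of $\Br(\overline{X})[n]$ corresponds to a level-$n$ structure defined over a small-degree extension of $K$, hence to a $K'$-point on a cover of $\mathcal{F}_L$ with $[K' \colon K]$ controlled. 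Height bounds on such covers, combined with Cadoret-Tamagawa-type uniform open image theorems, should bound the available level $n$.

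The principal obstacle is the uniformity itself. Existing open-image theorems for Galois representations attached to K3 surfaces produce constants that depend on the particular surface, whereas the conjecture demands uniformity across the entire set of K3 surfaces with $\NS(\overline{X}) \cong L$ and all number fields of bounded degree. This is morally parallel to the strong uniform boundedness conjecture for torsion of abelian varieties in the style of Mazur-Merel, and it is precisely here that genuinely new arithmetic input appears to be required; partial results along these lines are known in the CM case (Orr-Skorobogatov) and in certain families. The role of the present paper is to sidestep this obstacle in practice by reducing the detection of Noether-Lefschetz points to rational-point questions on the finitely many covers $\mathcal{F}_i$, which a resolution of Conjecture \ref{conj Brauer} would render genuinely effective by providing an \emph{a priori} bound on the required $N$.
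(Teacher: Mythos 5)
The statement you were asked to prove is stated in the paper as a \emph{conjecture}, attributed to V\'arilly-Alvarado, and the paper offers no proof of it: it is used only as a hypothesis in Theorem \ref{prop implication}. So there is no argument in the paper to compare yours against, and you are right to frame your text as a strategy rather than a proof. Within that framing, your survey of the relevant tools is broadly accurate: the identification of $\Br(\overline{X})$ with $\Hom(T(X),\Q/\Z)$ (the paper itself uses $\Br(\overline{X})[p]\cong\Hom(T(X_\C),\Z/p\Z)$ in the proof of Theorem \ref{prop implication}), the reduction to the Galois action on $T\otimes\widehat{\Z}$, the role of Mumford--Tate and Kuga--Satake, and the analogy with Mazur--Merel uniform boundedness are all standard in the literature around this conjecture, and you correctly name the genuine obstruction, namely uniformity of the image bounds across all surfaces with $\NS(\overline{X})\cong L$ and all $K$ of bounded degree. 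Two corrections are in order, though. First, $T=L^\perp$ is \emph{not} in general determined up to isometry by $L$ alone: the set $\mathcal{E}(L)$ of primitive embeddings of $L$ into $\Lambda$ can have more than one element (see Remark 1 of the paper), so $T$ is determined only up to the finitely many classes in $\mathcal{E}(L)$; uniqueness requires a hypothesis such as $\ell(A_L)\leq 20-\mathrm{rank}(L)$.

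Second, and more importantly, your third step reverses the paper's logical architecture and is circular as a route to the conjecture. The covers $\pi_i\colon\mathcal{F}_i\to\mathcal{F}_L$ of Theorem \ref{thm main} translate a $G_K$-fixed Brauer class of order $p$ into a low-degree point on a cover of large degree, but by themselves they place no bound on which $p$ can occur; it is precisely Conjecture \ref{conj Brauer} that supplies such a bound, which is why the paper's Theorem \ref{prop implication} \emph{assumes} the conjecture to make the Noether--Lefschetz criterion effective, rather than deducing the conjecture from the covers. Deriving a bound on $p$ from the geometry of the covers would require an input of Mordell--Lang or uniform-boundedness type (rational points on covers of large degree and bounded residue degree are scarce), and that input is exactly the open arithmetic content you flag at the end. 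So your proposal correctly locates the difficulty but does not reduce it.
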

This and similar finiteness have also been extensively studied for instance in \cite{MR2920883}, \cite{zbMATH06149392}, \cite{Rachel}, \cite{MR3731278}, \cite{zbMATH06898536}, \cite{zbMATH07262975}, \cite{VALLONI2021107772}.
\begin{cor} \label{cor implication}
 Assume that Conjecture \ref{conj Brauer} is true for a lattice $L$, and let $N>0$ be an integer. Then, there is a number $C = C(N,L) > 0$ such that, for any number field $K/\Q$ satisfying $[K \colon \Q] \leq N$, a $K$-rational point $x \in U(K)$ belongs to the Noether-Lefschetz locus if and only if point (1) of Theorem \ref{main thm} holds for one (and hence any) $p \geq C$.
\end{cor}
In the final part of the paper, we prove our main application. We recall that the Bombieri-Lang conjecture is a generalization of the Mordell conjecture, or Faltings' theorem, which states that rational points on varieties of general type are never Zariski dense.
\begin{thm} \label{cor shafa}
Assume the Bombieri-Lang conjecture and let $K$ be a number field. 
\begin{enumerate}
    \item  If $ 1 \leq \mathrm{rank}(L) \leq 11$ then the set of all the $K$-rational points in the Noether-Lefschetz locus satisfying (3) from Theorem \ref{main thm} is not Zariski-dense.
    \item Assume that $ \mathrm{rank}(L) = 3$. Then $\mathcal{F}^{nl}(K)$ is not Zariski dense. 
\end{enumerate}
\end{thm}
 Theorem \ref{main thm} readily yields the proof of (1) once we know that the spaces $\mathcal{F}_{L}^\bullet(p)$ are of general type for sufficiently large $p$, which follows from the results in \cite{MR3275655}. On the other hand, the proof of (2) does not directly follow from our main theorem, but on other similar constructions. We remark that (2) holds without any further assumption if we have an analogue of Theorem 1.1 in \cite{MR3275655} for any rank.

The statement above is relevant only when $\mathcal{F}_L$ is not of general type. There are numerous examples where it is shown that $\mathcal{F}_L$ is either rational or unirational, see \cite{zbMATH00125557},\cite{MR1201388}, \cite{MR1397987}, \cite{MR3275655}, \cite{zbMATH07390638}, \cite{MR4297179}, as well as the references therein. Similarly, there are also examples where $\mathcal{F}_L$ is unirational and $\mathrm{rank}(L) = 3$, see for instance \cite{MEZFOR}. Nonetheless, it is expected that for all but finitely many $L$, the space $\mathcal{F}_L$ is of general type, as suggested by the results in \cite{zbMATH05199718}, \cite{MR3275655} and \cite{zbMATH07390638}.

Finally, the only unconditional result like Theorem \ref{cor shafa} is proved for any one-dimensional family in \cite[Thm.1.1]{zbMATH06218379}. After the author had written the first version of this paper, he was pointed out to the preprint \cite{CadoretPreprint} which contains similar arguments for $\Q_\ell$-local systems on any variety. In particular, in \cite{CadoretPreprint} it is explained how non-density results like Theorem \ref{cor shafa} would follow from the Bombieri-Lang conjecture together with certain geometric conjectures. In our case, thanks to Theorem \ref{main thm}, we are able to replace such conjectures with the results of \cite{MR3275655}, allowing us to prove Theorem \ref{cor shafa}.

\subsection{Acknowledgment}
This work was funded by the Europen Research Council (ERC) under the European Union's Horizon 2020 research and innovation programme - grant agreement No 948066. I express my gratitude to Klaus Hulek and Tobias Kreutz for reviewing the initial draft and providing me with important references. In addition, I also would like to thank Emiliano Ambrosi, Olivier de Gaay Fortman, Salvatore Floccari, Stefan Schreieder, Matthias Sch\"{u}tt and Alexei Skorobogatov for invaluable discussions and assistance.

\section{Moduli spaces of lattice polarized K3 surfaces} \label{section moduli}
In this chapter, we review the construction of the moduli spaces of lattice-polarized K3 surfaces. We follow \cite{zbMATH00953753} as a primary reference, along with the more recent paper \cite{https://doi.org/10.48550/arxiv.2101.12186}
\subsection{Stable orthogonal groups} A lattice $L$ is an abelian group $L \cong \Z^n$ together with a non-degenerate symmetric pairing $L \times L \rightarrow \Z$. All the lattices in this paper will be even, meaning that $(x,x) \in 2 \Z$ for any $x \in L$. The signature of $L$ is the signature of $L_\R$ and the discriminant form of an even lattice $L$ is the group $A_L = L^\vee/L$ endowed with its natural quadratic form $q \colon A_L \rightarrow \Q/ 2\Z$ (see \cite{MR525944}). One denotes by $\OO(A_L)$ the finite group of isometries of $A_L$. The isometry group of $L$ is denoted as $\OO(L)$. We denote by $\ell(A_L)$ the length of the abelian group $A_L$ (minimal number of generators) and we say that $L$ is unimodular if $\mathrm{discr}(L)= \pm 1,$ that is, if the natural inclusion $L \hookrightarrow L^\vee$ is an isomorphism. 
\begin{defi}
The stable isometry group of $L$ is $\OO^*(L) \coloneqq \ker (\OO(L) \rightarrow \OO(A_L))$. 
\end{defi}
The group $\OO^*(L)$ enjoys some well-known properties:
\begin{lemma} \label{lemma stable}
Let $L \hookrightarrow U$ be a primitive embedding of even lattices, where $U$ is unimodular. Define $\Gamma(L,U) \coloneqq \{ g \in \OO(U) \colon g_{| L^\perp } = \id \}$. Then, the natural map $\Gamma(L,U)  \rightarrow \OO(L)$ induces an isomorphism  $\Gamma(L,U)  \cong \OO^*(L).$
\end{lemma}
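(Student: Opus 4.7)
The plan is to reduce the statement to Nikulin's gluing description of primitive embeddings into a unimodular lattice. Since $L \hookrightarrow U$ is primitive and $U$ is unimodular, the orthogonal complement $L^\perp \hookrightarrow U$ is also primitive, and one has the natural chain $L \oplus L^\perp \subset U \subset L^\vee \oplus (L^\perp)^\vee$. The key input I would cite from \cite{MR525944} is that $U/(L \oplus L^\perp)$ sits inside $A_L \oplus A_{L^\perp}$ as the graph of an anti-isometry $\phi \colon A_L \xrightarrow{\sim} A_{L^\perp}$. With this in hand the lemma becomes a bookkeeping exercise.

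For injectivity of the map $\Gamma(L,U) \to \OO(L)$, I would argue that if $g \in \Gamma(L,U)$ restricts to the identity on $L$, then $g$ fixes both $L$ and $L^\perp$ pointwise, hence fixes the finite-index sublattice $L \oplus L^\perp \subset U$. A $\Z$-linear endomorphism of a free $\Z$-module fixing a finite-index subgroup is the identity after tensoring with $\Q$, so $g = \id$ on $U$. I would also note in passing that every $g \in \Gamma(L,U)$ automatically stabilizes $L$, since $L = (L^\perp)^\perp$ by primitivity, so the restriction to $L$ really does land in $\OO(L)$.

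For the image, given $g \in \OO(L)$, I would form the naive extension $\tilde g := g \oplus \id_{L^\perp}$, which is an isometry of $L \oplus L^\perp$ and therefore of $U_\Q$. The question reduces to whether $\tilde g$ stabilizes $U$, and by the gluing description this happens iff the induced action $\bar g \oplus \id$ on $A_L \oplus A_{L^\perp}$ preserves the graph of $\phi$. A short calculation shows this graph is carried to the graph of $\phi \circ \bar g^{-1}$, which coincides with the original graph precisely when $\bar g = \id$, i.e.\ when $g \in \OO(L)^*$. Conversely, any such $\tilde g$ lies in $\Gamma(L,U)$ by construction and restricts to $g$, which yields surjectivity onto $\OO(L)^*$ and completes the identification.

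I do not foresee a serious obstacle here: the content is entirely concentrated in the gluing picture, and the only minor care needed is in fixing sign conventions for $\phi$ and the induced action $\bar g$, which however do not affect the graph-preservation criterion that drives the argument.
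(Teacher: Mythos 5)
Your proof is correct and follows essentially the same route as the paper: both rest on Nikulin's gluing description of the primitive embedding via an anti-isometry $A_L \cong -A_{L^\perp}$, with your explicit graph-preservation computation simply unpacking the extension criterion (Nikulin, Corollary 1.5.2) that the paper cites directly. Your added injectivity argument via the finite-index sublattice is a welcome detail the paper leaves implicit.
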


\begin{proof} 
Let $K$ be the orthogonal complement of $L$ in $U$. By \cite[Prop.1.6.1]{MR525944}, there is a natural isomorphism $A_L \cong -A_K$ where $-A_K$ denotes the group $A_K$ with opposite quadratic form. It follows that if $g \in \Gamma(L,U)$ then the induced isometry $g_{|L} \in \OO(L)$ must act trivially on $A_L$. On the other hand, let $\tilde{g} \in \OO^*(L)$ be a stable isometry. Due to \cite{MR525944}[Cor.1.5.2] one sees that $\tilde{g} \oplus \id_K \colon L \oplus K \xrightarrow{\sim} L \oplus K$ can always be extended to a global isometry $g \in   \Gamma(L,U)$. 
\end{proof}
In other words, an isometry of $L$ is stable if and only if it can be extended to an isometry of $U$ by requiring it to be the identity on $L^\perp$. Note that this works for every unimodular lattice $U$ which contains $L$ primitively. 
\begin{prop} \label{prop funct O^*}
Let $L \hookrightarrow N$ be any inclusion of lattices that respects the quadratic form. Then, there is a natural map $\mathrm{O}^*(L) \hookrightarrow \mathrm{O}^*(N)$ induced by extension by $1$.
\end{prop}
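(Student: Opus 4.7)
The plan is to make sense of ``extension by $1$'' by working with the sublattice $M := L^\perp \cap N$ and exploiting the stability assumption to descend from $L^\vee \oplus M^\vee$ down to $N$.

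First I would note that $L \oplus M \hookrightarrow N$ is a finite-index inclusion of even lattices (they span the same $\QQ$-vector space because $L$ and $M$ together span $N_\QQ$, since $L_\QQ$ and its orthogonal in $N_\QQ$ do). Dualizing the tower $L \oplus M \subseteq N$ gives the chain
\[
L \oplus M \;\subseteq\; N \;\subseteq\; N^\vee \;\subseteq\; (L \oplus M)^\vee \;=\; L^\vee \oplus M^\vee,
\]
so both $N/(L\oplus M)$ and $N^\vee/(L\oplus M)$ are subgroups of $A_L \oplus A_M$.

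Next, given $g \in \OO(L)^*$, define $\widehat{g} := g \oplus \id_M$ as a $\QQ$-linear isometry of $(L \oplus M)_\QQ = N_\QQ$; this is the meaning of ``extension by $1$''. Since $\widehat{g}$ preserves $L^\vee$ and $M^\vee$, it preserves $L^\vee \oplus M^\vee$, and the induced action on $A_L \oplus A_M$ is $\overline{g}\oplus \id$, which is trivial precisely because $g \in \OO(L)^*$. Consequently $\widehat{g}$ acts trivially on every subgroup of $A_L \oplus A_M$, and in particular preserves the subgroup $N/(L \oplus M)$. This shows that $\widehat{g}$ carries $N$ to $N$, so $\widehat{g}|_N \in \OO(N)$.

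Finally, to see that $\widehat{g}|_N$ is in fact stable on $N$, observe that $A_N = N^\vee/N$ is a subquotient of $A_L \oplus A_M$ (via $N^\vee/(L \oplus M) \twoheadrightarrow N^\vee/N$), and the action on the ambient $A_L \oplus A_M$ is already trivial, so $\widehat{g}|_N$ acts trivially on $A_N$. Thus $\widehat{g}|_N \in \OO(N)^*$. The assignment $g \mapsto \widehat{g}|_N$ is manifestly a group homomorphism, and it is injective because $\widehat{g}|_L = g$. I don't expect any real obstacle here: the only subtle point is making sure that the non-primitivity of $L$ in $N$ does not break the extension, and this is handled precisely by the observation that $N^\vee \subseteq L^\vee \oplus M^\vee$, so everything fits inside the orthogonal sum where ``extension by $1$'' is unambiguous.
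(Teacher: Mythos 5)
Your argument is correct, and it takes a genuinely different route from the paper's. The paper factors $L \hookrightarrow N$ through the saturation $\tilde{L}$ of $L$ in $N$ and treats the two resulting steps separately: the finite-index step $L \subseteq \tilde{L}$ is handled via Nikulin's correspondence between overlattices and isotropic subgroups $H \subseteq A_L$ together with the isomorphism $A_{\tilde{L}} \cong H^{\perp}/H$, and the primitive step $\tilde{L} \hookrightarrow N$ is handled by embedding $N$ primitively into an even unimodular lattice $U$ and applying Lemma \ref{lemma stable} to extend $g \oplus \id_{L^{\perp}}$ to $U$ and then restrict to $N$. You instead work entirely inside $N_{\QQ}$ with $M = L^{\perp} \cap N$, and the sandwich $L \oplus M \subseteq N \subseteq N^{\vee} \subseteq L^{\vee} \oplus M^{\vee}$ reduces both the preservation of $N$ by $g \oplus \id_M$ and the triviality of its action on $A_N$ to the single observation that it displaces elements of $L^{\vee} \oplus M^{\vee}$ only by elements of $L \oplus M \subseteq N$. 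This is more elementary and uniform: no saturation, no auxiliary unimodular lattice, no appeal to the overlattice classification, and the primitive and non-primitive cases are not distinguished. (The two constructions produce the same isometry of $N$, since both restrict to $g \oplus \id_M$ on the finite-index sublattice $L \oplus M$.) What the paper's route buys is that it stays aligned with Lemma \ref{lemma stable}'s description of stable isometries as restrictions of isometries of a unimodular overlattice fixing the orthogonal complement, which is the form in which stability is exploited elsewhere (e.g.\ in Proposition \ref{thm univ bound}); but as a proof of the proposition itself your version is complete and, if anything, cleaner.
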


\begin{proof}
Let $\tilde{L}$ be the saturation of $L$ in $N$. The map $L \hookrightarrow N$ factorizes as $L \hookrightarrow \tilde{L} \hookrightarrow N$ where the first arrow exhibits $\tilde{L}$ as an overlattice of $L$ (that is, the quotient $\tilde{L}/L$ is finite) and the second arrow is a primitive embedding of lattices. It is sufficient to prove the proposition for these two kinds of maps. So assume that $N$ is an overlattice of $L$. As explained in \cite[Prop.1.4.1]{MR525944}, $N$ is determined by an isotropic subgroup $H \subset A_L$, that is, under the natural quotient map $\pi \colon L^\vee \rightarrow A_L$ one has that $\pi^{-1}(H) = N$. Clearly, if an isometry $g \in \OO(L)$ acts trivially on $A_L$ then $g$ extends uniquely to an isometry of $N$. Moreover, there is a natural isomorphism $A_N \cong H^\perp / H$, and it follows that such an extension must belong to $\OO^*(N)$. Consider now a primitive embedding $L \hookrightarrow N$, so that $N/L$ is torsion free. Let $U$ be a unimodular lattice with a primitive embedding $N \hookrightarrow U$. We obtain a chain $L \subset N \subset U$. Let $g \in \OO^*(L)$ be a stable isometry. By the lemma above, we can extend $g \oplus \id_{L^\perp}$ to an isometry $\tilde{g} \in \OO(U)$. This isometry clearly belongs to $\Gamma(N,U) \cong \OO^*(N)$, and the proposition is proved. 
\end{proof} 
\begin{rmk}
All these properties hold in general for lattices over PIDs, e.g., for $p$-adic lattices.
\end{rmk}
\subsection{Moduli spaces} 
Let $\Lambda$ denote the K3 lattice, the unique even, unimodular lattice of rank 22 and of signature $(3_+,19_-)$. An even lattice $L$ is called hyperbolic if its signature pair is of the form $(1_+, n_-)$. Two primitive embedding $f_1, f_2 \colon L \hookrightarrow \Lambda$ are equivalent if there exists $g \in \OO(\Lambda)$ such that $f_1 = g \circ f_2$. We denote by $\mathcal{E}(L)$ the set of equivalence classes of primitive embedding of $L$ into $\Lambda$. 

\begin{rmk} \label{rmk embedding}
It is known that $\mathcal{E}(L)$ is always finite, e.g., it follows from Nikulin's description of this set \cite[Prop.1.6.1]{MR525944} and the finiteness of the genus of a lattice. Moreover, for any even hyperbolic lattice $L$ of rank $\mathrm{rank}(L) \leq 20$, if $\ell(A_L) \leq 20 - \mathrm{rank}(L)$ then $\mathcal{E}(L) = \{ * \}$ due to \cite{MR525944}[Cor.1.13.3], so that every lattice of rank smaller than $10$ embeds uniquely into the K3 lattice.
\end{rmk}
For an embedding $f$ we denote by $[f]$ its class in $\mathcal{E}(L)$. For an even hyperbolic lattice $L$, write $V= \{x \in L_\R \colon (x,x)>0 \}$ and fix a primitive embedding $f \colon L \hookrightarrow \Lambda$. Consider the set $\Delta(f) \coloneqq \{ \delta \in \Lambda \setminus L^\perp \colon (\delta, \delta) = -2 \text{ and } \langle \delta, L \rangle \text{ is hyperbolic } \}$. For each $\delta \in \Delta(f)$ we obtain a hyperplane $V_\delta \subset L_\R$ given by the condition $(x, \delta) = 0$. A small cone of $L$ relative to $f$ is a connected component of the complement $V \backslash \bigcup_{\delta \in \Delta(f) } V_\delta$ (see \cite[Def.2.21]{https://doi.org/10.48550/arxiv.2101.12186}). Note that the set $\bigcup_{\delta \in \Delta(f) } V_\delta$ depends only on the class $[f] \in \mathcal{E}(L)$. 
\begin{defi}
A small cone of $L$ is a connected component $\mathcal{C}$ of $$V \setminus \bigcup_{\substack{\delta \in \Delta(e) \\ e \in \mathcal{E}(L)}} V_\delta.$$
\end{defi}

Since the set $\mathcal{E}(L)$ is finite, it follows from \cite[Prop.2.23]{https://doi.org/10.48550/arxiv.2101.12186} that the small cones are locally rational polyhedral, that is, they are given locally by the intersection of finitely many rational half spaces. 

Fix an even hyperbolic lattice $L$ that embeds primitively into the K3 lattice and a small cone $\mathcal{C}$ of $L$. The following is \cite[Def.2.16]{https://doi.org/10.48550/arxiv.2101.12186} and \cite[Prop.2.24]{https://doi.org/10.48550/arxiv.2101.12186}. 

\begin{defi} \label{defi lattice polarized}
For $e \in \mathcal{E}(L)$, an $(L,e)$-quasipolarized K3 surface is a pair $(X,\iota)$ where $X$ is a complex K3 surface and $\iota$ is a primitive embedding $\iota \colon L \hookrightarrow \NS(X)$ such that $\iota(\mathcal{C})$ contains a big and nef class, and such that the composition $L \rightarrow \NS(X) \rightarrow H^2(X, \Z(1)) \cong \Lambda$ represents $e$ (this does not depend on the last isometry). Two $(L,e)$-quasipolarized K3 surfaces $(X,\iota), (X', \iota')$ are isomorphic if there exists an isomorphism $h \colon X \xrightarrow{\sim} X'$ such that $\iota = h^* \circ \iota'$ where $h^* \colon \NS(X') \rightarrow \NS(X)$ is the natural pullback map on N\'{e}ron-Severi lattices (in particular, the isomorphism is not required to preserve the small cone.)
\end{defi}

Note that $\iota(\mathcal{C})$ contains automatically an ample class whenever $\iota(L)^\perp$ does not contain $(-2)-$classes. This is due to the definition of small cones: if $\ell \in \iota(\mathcal{C})$ is a big and nef class, then it is ample if and only if $(\ell, \delta) > 0$ for any effective $(-2)$-class $\delta \in \NS(X)$. If $\delta \notin L^\perp$, then $(\ell, \delta) \neq 0$ because $\ell \in \mathcal{C}$, so it follows that $(\delta , \ell) >0$. 

For a hyperbolic lattice $L$, its roots are given by $\Delta(L) = \{ \delta \in L \colon (\delta,\delta) = -2 \}$. The Weyl group $W(L) \subset \OO(L)$ is the group generated by the reflections in elements of $\Delta(L)$. Note that $W(L) \subset \OO^*(L)$ always, because any such reflection can be extended (as a reflection) on any even unimodular lattice $U$ which contains $L$ primitively. A Weyl chamber is a connected component of $V \setminus \bigcup_{\delta \in \Delta(L)} V_{\delta}$. Clearly, small cones are contained in Weyl chambers. It is a classical fact that $W(L)$ acts freely on $V$ with a Weyl chamber $\mathcal{K}$ as a fundamental domain, so that the set of Weyl chambers forms a torsor under $W(L)$. If $X$ is a projective K3 surface, then its K\"{a}hler cone $\mathcal{K}_X$ is a Weyl chamber of $\NS(X)$.

 The construction of the moduli space of $(L,e)$-quasipolarized K3 surfaces then follows Dolgachev's paper. Let $\mathcal{M}$ be the fine moduli space of marked K3 surfaces. This is a $20$-dimensional complex manifold which is not Hausdorff, and which parametrizes K3 surfaces $X$ together with a marking $\phi \colon H^2(X, \Z(1)) \xrightarrow{\sim} \Lambda$, up to the natural notion of isomorphism. Over $\mathcal{M}$ one has a universal family $(\mathcal{X}, \phi)$, and the period map $$p \colon \mathcal{M} \rightarrow \mathcal{D} \coloneqq \{ \sigma \in \Pp(\Lambda_\C) \colon (\sigma, \sigma) = 0, \,\, (\sigma, \overline{\sigma}) > 0 \}$$
takes a point $m \in \mathcal{M}$ to the line generated by $\phi_m(H^{2,0}(\mathcal{X}_m))$. The open subset $\mathcal{D}$ is the period domain of $\Lambda$ and for any $x \in \mathcal{D}$ seen as a line in $\Lambda_\C$, let $W_x \subset \OO(\Lambda)$ be its Weyl group, that is, the group generated by reflection in elements $\lambda \in \Lambda \cap x^\perp$ such that $ (\lambda,\lambda) = -2$. The period map $p$ is surjective \cite{MR592693} and its fibers $p^{-1} \{x\}$ are naturally torsors under $W_x$, where $g \in W_x$ acts by $(X, \phi) \mapsto (X, g \circ \phi)$. It follows that the fibers are in one-to-one correspondence to Weyl chambers in $\NS(X)$ whenever $X$ is projective. 

One then defines marked $(L,e)$-quasipolarized K3 surfaces. Fix an embedding $ L \hookrightarrow \Lambda$ which represents $e$ and a small cone $\mathcal{C}$ on $L$. 
\begin{defi}
A marked $(L,e)$-quasipolarized K3 surface is a K3 surface $X$ together with a marking $\phi \colon \mathrm{H}^2(X, \Z(1)) \xrightarrow{\sim} \Lambda$ such that $L \subset \phi(\NS(X))$ and such that the induced map $L \rightarrow \NS(X)$ is quasipolarized (the image of $\mathcal{C}$ contains a big and nef divisor). Two marked $(L,e)$-quasipolarized K3 surfaces $(X,\phi), (X', \phi')$ are isomorphic if there exists an isomorphism, $h \colon X \xrightarrow{\sim} X'$ such that $\iota = h^* \circ \iota'$ where $h^* \colon \mathrm{H}^2(X', \Z(1))  \rightarrow \mathrm{H}^2(X, \Z(1)) $ is the natural pullback map in cohomology. 
\end{defi}
To any marked $(L,e)$-quasipolarized K3 surface $(X, \phi)$ one can associate a $(L,e)$-quasipolarized K3 surface by putting $\iota_\phi =\phi^{-1}_{|L} \colon L \hookrightarrow \NS(X)$. Consider now the period domain $$\mathcal{D}_{L^\perp} \coloneqq \{ \sigma \in \Pp(L^\perp_\C) \colon (\sigma, \sigma) = 0 \text{ and } (\sigma, \overline{\sigma}) >0 \}. $$
There is a natural embedding $\mathcal{D}_{L^\perp} \subset \D$, and we put $\mathcal{M}_L = p^{-1}\{\mathcal{D}_{L^\perp} \}$; this set consists of all the marked K3 surfaces $(X, \phi)$ such that $L \subset \phi(\NS(X))$. Restricting $\mathcal{M}_L$ further $$\mathcal{M}^{qp}_L \coloneqq \{ (X, \phi) \in \mathcal{M}_L \colon (X, \phi^{-1}_{|L}) \text{ is quasipolarized}  \}$$ yields the fine moduli space of marked $(L,e)$-quasipolarized K3 surfaces. For a point $x \in \mathcal{D}_{L^\perp}$ denote by $W_x(L^\perp) \subset \OO(\Lambda)$ the group generated by reflections in vectors $\delta \in x^\perp \cap L^\perp$ such that $(\delta, \delta) = -2$. Note that $x^\perp \cap L^\perp$ is a definite lattice (it corresponds to the orthogonal of $L$ inside $\NS(X)$) so that $W_x(L^\perp)$ is finite. 
\begin{thm} \label{thm moduli marked}
The restriction of the period map to $\mathcal{M}^{qp}_L$ is surjective, and the fibers $p^{-1}\{x \}$ are naturally torsors under the finite groups $W_x(L^\perp)$. 
\end{thm}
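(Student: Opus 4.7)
The plan is to combine two classical facts about the global period map $p\colon \mathcal{M}\to\mathcal{D}$---its surjectivity (Todorov, Siu) and the fact that its fibres are torsors under $\{\pm 1\}\times W_x$ (strong Torelli for marked K3 surfaces)---with a careful analysis of how the quasipolarization condition and the lattice $L$ cut out a sub-torsor on each fibre.

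For surjectivity of $p|_{\mathcal{M}^{qp}_L}$, I would take $x\in\mathcal{D}_{L^\perp}$ and any preimage $(X,\phi)\in p^{-1}\{x\}$, which automatically lies in $\mathcal{M}_L$ since $x\in\mathcal{D}_{L^\perp}$ forces $L\subset\phi(\NS(X))$. The remaining task is to modify the marking so that $\iota_\phi(\mathcal{C})$ lands in $\overline{\mathcal{K}_X}$. The key geometric input is that $\iota_\phi(\mathcal{C})$ is contained in the closure of a single Weyl chamber $C$ of the positive cone $V_X\subset\NS(X)_\R$: by construction, the small cone is obtained from $V$ by removing the hyperplanes $V_\delta$ attached to all $(-2)$-roots $\delta\in\Lambda\setminus L^\perp$, and this list includes the walls coming from $\alpha\in\Delta(\NS(X))\setminus L^\perp$, which are exactly the walls of $V_X$ meeting $\iota_\phi(V)$ transversally. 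Since $W(\NS(X))$ acts simply transitively on Weyl chambers of $V_X$, there is a unique $w\in W(\NS(X))$ sending $C$ onto $\mathcal{K}_X$; regarded as an element of $W_x$ via $\phi$, this $w$ produces a modified marking in $\mathcal{M}^{qp}_L$ over $x$.

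For the torsor structure, I would show that $\{\pm 1\}\times W_x(L^\perp)$ acts freely and transitively on $p^{-1}\{x\}\cap\mathcal{M}^{qp}_L$. The action is well-defined because generators of $W_x(L^\perp)$ are reflections in $(-2)$-roots of $L^\perp\cap x^\perp\cap\Lambda$, which fix $L$ pointwise, so $\iota_\phi(\mathcal{C})$ is unchanged and quasipolarization is preserved; the factor $-\id$ is handled via its compatibility with the swap of the two connected components of $\mathcal{D}_{L^\perp}$. Freeness is inherited from freeness of the ambient $\{\pm 1\}\times W_x$-action on $p^{-1}\{x\}$. For transitivity, given two quasipolarized markings $(X,\phi_1),(X,\phi_2)$ with the same period, the torsor structure of $p$ provides $g\in\{\pm 1\}\times W_x$ with $\phi_2=g\phi_1$. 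Then both $\mathcal{C}$ and $g^{-1}(\mathcal{C})$ lie in the closure of $C_0\coloneqq\phi_1(\mathcal{K}_X)$, and a Steinberg-type argument forces $g$ to be a product of reflections in $L^\perp$-roots, together with a possible $-\id$.

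The main obstacle I expect is the Steinberg-type claim in the last step: the pointwise stabilizer of $L$ inside the typically infinite, Lorentzian Weyl group $W_x$ must be identified with the Weyl subgroup $W_x(L^\perp)$. The classical Steinberg theorem is stated for finite reflection groups, so some care is required here; one plans to reduce to the classical case by restricting the action of the stabilizer to the finite, negative-definite lattice $L^\perp\cap x^\perp\cap\Lambda$, where the stabilizer acts faithfully and its image is generated by the reflections of that root system, namely $W_x(L^\perp)$. A secondary technical point is the verification that $\iota_\phi(\mathcal{C})$ lies in the closure of a unique Weyl chamber of $\NS(X)$, which one may deduce from the local rational polyhedrality of small cones \cite{https://doi.org/10.48550/arxiv.2101.12186}.
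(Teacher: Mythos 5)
Your overall strategy is the same as the paper's: surjectivity is obtained by picking any preimage of $x$ and correcting the marking by a Weyl group element so that the small cone lands in the closure of the K\"ahler chamber, and the torsor structure comes from the fact that the small cone avoids every wall $V_\delta$ with $\delta\notin L^\perp$, so that the ``difference'' of two quasipolarized markings over $x$ must be a product of reflections in roots of $L^\perp$. The paper runs the second step through the strong Torelli theorem (an effective $(-2)$-class sent to a non-effective one must lie in $L^\perp$ by the definition of small cones), whereas you run it through chamber combinatorics and a stabilizer theorem for the reflection group; these are equivalent, and you correctly isolate the fixed-point theorem for the infinite Lorentzian Weyl group as the real content of the step. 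That theorem does hold in the required generality (stabilizers of points of the Tits cone are generated by the reflections they contain), so citing it directly is cleaner than your proposed reduction, in which the claim that the stabilizer acts faithfully on $L^\perp\cap x^\perp\cap\Lambda$ is itself unjustified.

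Three concrete corrections. First, $\iota_\phi(\mathcal{C})$ is \emph{not} contained in the closure of a unique Weyl chamber: it is contained in every wall $\delta^\perp$ with $\delta$ a root of $\iota_\phi(L)^\perp\cap\NS(X)$, hence in the closures of exactly $|W_x(L^\perp)|$ chambers --- this multiplicity is precisely what the torsor statement measures, and your uniqueness claim contradicts your own transitivity step. Existence of one such chamber is all that surjectivity needs. Second, in the surjectivity step you must allow $w\in\{\pm1\}\times W(\NS(X))$, as the paper does, not just $w\in W(\NS(X))$: if $\iota_\phi(\mathcal{C})$ lies in the component of the positive cone opposite to $\mathcal{K}_X$, no Weyl element alone can help, since $W(\NS(X))$ preserves the two components. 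Third, and most seriously, your treatment of the $\{\pm1\}$ factor is based on a false premise: $-\id$ acts trivially on $\PP(\Lambda_\C)$, so it fixes every point of $\mathcal{D}_{L^\perp}$ (the two components are exchanged by complex conjugation, not by $-\id$) and hence preserves each fibre of $p$; but it replaces $\iota_\phi(\mathcal{C})$ by $\iota_\phi(-\mathcal{C})$, which lies in the opposite component of the positive cone of $\NS(X)_\R$, so $(X,-\phi)$ is not $\mathcal{C}$-quasipolarized when $(X,\phi)$ is. Likewise no element of $\{-1\}\times W_x$ stabilizes a point of $\iota_\phi(\mathcal{C})$, so your Steinberg argument can only ever output elements of $W_x(L^\perp)$. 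As written, your argument (much like the paper's own, which also concludes only with ``there is $w\in W(L^\perp)$'') establishes a $W_x(L^\perp)$-torsor structure; to get the stated $\{\pm1\}\times W_x(L^\perp)$ you would have to explain in what sense $-\id$ acts on the quasipolarized fibre at all, and this point needs to be confronted rather than waved at.
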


\begin{proof}
 In fact, let $x \in \mathcal{D}_{L^\perp}$ be any period and pick a preimage $(X, \phi)$. Since we have that $ W(\NS(X)) \subset \OO^*(\NS(X))$ we get a natural embedding $W(\NS(X)) \subset \OO(\mathrm{H}^2(X, \Z))$ so that we let $ W(\NS(X))$ act on the fibers of $\mathcal{M}_L \rightarrow \mathcal{D}_{L^\perp}$. Then we simply choose $w \in W(L)$ so that $\phi( w (\mathcal{K}_X))$ contains the fixed small cone $\mathcal{C}$ so that $(X, \phi \circ w) \in \mathcal{M}_L^{qp}$. This proves surjectivity.  
 
For the other statement, let $(X, \phi)$ and $(X, \psi)$ be two different markings that yield the same $(L,e)$-quasipolarized K3 surface. Consider the Hodge isometry $\phi^{-1} \circ \psi$ of $\mathrm{H}^2(X, \Z(1))$, which is the identity on $L$. If this map is not induced by an isomorphism, then there must exists an effective $(-2)$-class $\delta \in \NS(X)$ such that $\phi^{-1} \circ \psi( \delta)$ is not effective. But then $\delta \in \NS(X)$ must belong to $L^\perp$ by the definition of small cones, i.e., that there is $w \in W(L^\perp)$ such that $(X, \psi \circ w)$ and  $(X, \phi)$ are isomorphic. It is not difficult to show that if $w \neq w'$ then $(X, \psi \circ w) \neq (X, \phi \circ w')$. This concludes the proof.
\end{proof}

\begin{rmk}
If in Definition \ref{defi lattice polarized} we replace small cones by Weyl chamber as in \cite{zbMATH00953753} the same conclusion would not hold, see \cite[Rmk. 2.30]{https://doi.org/10.48550/arxiv.2101.12186}. 
\end{rmk}
In order to construct the space $\mathcal{M}_{L,e} \subset \mathcal{M}_L$ we had to make the choice of a small cone and of an embedding $f \colon L \hookrightarrow \Lambda$ representing $e$. As explained in \cite{https://doi.org/10.48550/arxiv.2101.12186}, for any other choice, one obtains a (non-canonically) isomorphic fine moduli space of marked quasipolarized K3 surfaces. As we shall see, this ambiguity disappears when we eliminate the marking. Consider the stable group $\Gamma = \{ g \in \OO(\Lambda) \colon g_{|L} = \mathrm{Id}_L \}.$ Note that $\Gamma \cong \OO^*(L^\perp)$ acts both on $\mathcal{M}^{qp}_{L,e}$ and on $\mathcal{D}_{L^\perp}$. For the following, see \cite[Cor.2.27]{https://doi.org/10.48550/arxiv.2101.12186} and the discussion after \cite[Prop.3.3]{zbMATH00953753}. 
\begin{prop}  \label{prop moduli}
The period map induces an isomorphism $$\mathcal{F}_{L,e} \coloneqq \mathcal{M}^{qp}_{L,e}/ \Gamma \cong \mathcal{D}_{L^\perp}/ \Gamma.$$
The points of $\mathcal{F}_{L,e}$ are in one-to-one correspondence to isomorphism classes of $(L,e)$-quasipolarized K3 surfaces. 
\end{prop}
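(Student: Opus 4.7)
The strategy is to check that the period map $p \colon \mathcal{M}^{qp}_{L,e} \to \mathcal{D}_{L^\perp}$ is $\Gamma$-equivariant, descends to a bijection between quotients via Theorem \ref{thm moduli marked}, and finally that this quotient parametrizes isomorphism classes of $(L,e)$-quasipolarized K3 surfaces.

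First, I would verify that $\Gamma = \{g \in \OO(\Lambda) : g|_L = \id_L\} \cong \OO(L^\perp)^*$ acts on $\mathcal{M}^{qp}_{L,e}$ by $(X, \phi) \mapsto (X, g \circ \phi)$: since $g$ fixes $L$ pointwise, both the inclusion $L \subset \phi(\NS(X))$ and the small-cone quasipolarization condition (which depends only on $\phi^{-1}|_L$) are preserved. Simultaneously $\Gamma$ acts on $\mathcal{D}_{L^\perp}$ through its restriction to $L^\perp$, and by construction $p$ is $\Gamma$-equivariant, so it descends to a map $\overline{p} \colon \mathcal{F}_{L,e} \to \mathcal{D}_{L^\perp}/\Gamma$. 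Surjectivity of $\overline{p}$ is then inherited from that of $p$, which is the first part of Theorem \ref{thm moduli marked}.

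For injectivity I would take $(X, \phi), (X', \phi') \in \mathcal{M}^{qp}_{L,e}$ whose periods lie in the same $\Gamma$-orbit and, after translating by an element of $\Gamma$, reduce to the case $p(X, \phi) = p(X', \phi') = x$. Theorem \ref{thm moduli marked} then identifies the fibre at $x$ with a $W_x(L^\perp)$-torsor (the $\{\pm 1\}$-factor cannot preserve $\mathcal{M}^{qp}_{L,e}$ once the small cone $\mathcal{C}$ is fixed, since it reverses the sign of $\phi^{-1}(\mathcal{C})$). Any reflection in a root $\delta \in L^\perp \cap x^\perp$ satisfies $(L,\delta) = 0$ and therefore acts trivially on $L$, so $W_x(L^\perp) \subset \Gamma$, and $(X, \phi), (X', \phi')$ represent the same class in $\mathcal{F}_{L,e}$.

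For the correspondence with isomorphism classes, a marked object $(X, \phi) \in \mathcal{M}^{qp}_{L,e}$ restricts to the $(L,e)$-quasipolarized K3 surface $(X, \phi^{-1}|_L)$, and any such $(X, \iota)$ lifts to a marking uniquely up to an element of $\Gamma$ (by the definition of the class $e$). Two marked objects in the same $\Gamma$-orbit visibly yield isomorphic $(L,e)$-quasipolarized K3 surfaces, and conversely an isomorphism of the latter lifts to a $\Gamma$-translate between the markings. The main subtlety I anticipate is the careful bookkeeping of the $\{\pm 1\}$-factor in Theorem \ref{thm moduli marked} and the difference between small cones and Weyl chambers (flagged in the remark preceding the proposition); this should be resolved by noting that small cones come in $\pm$-pairs and that each Weyl chamber decomposes into finitely many small cones, so once $\mathcal{C}$ is fixed the residual ambiguity on the marking is exactly $W_x(L^\perp)$.
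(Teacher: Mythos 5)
Your proof is correct, and it is worth noting that the paper gives no argument of its own for this proposition --- it is justified there purely by citation to Alexeev--Engel and Dolgachev --- so your write-up is a genuine filling-in rather than a restatement of an in-text proof. The skeleton ($\Gamma$-equivariance of $p$, surjectivity inherited from Theorem \ref{thm moduli marked}, injectivity via the fibre torsor, and the identification of $\Gamma$-orbits with isomorphism classes using that a quasipolarized embedding $\iota$ lifts to a marking uniquely up to $\Gamma$ because the composite $L \to \NS(X) \to H^2(X,\Z(1))$ represents the fixed class $e$) is the standard one, and each step checks out; in particular $W_x(L^\perp) \subset \Gamma$ since reflections in roots of $x^\perp \cap L^\perp$ fix $L$ pointwise. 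The step where you do real work is the $\{\pm 1\}$ factor, and you resolve it the right way: $-\Id_\Lambda$ does not lie in $\Gamma$, so if the fibres of $p$ restricted to $\mathcal{M}^{qp}_{L,e}$ were literally torsors under $\{\pm 1\} \times W_x(L^\perp)$ as Theorem \ref{thm moduli marked} states, injectivity of the descended map would fail by a factor of $2$; your observation that $-\Id$ carries $\phi^{-1}(\mathcal{C})$ into the component of the positive cone containing no big and nef classes shows that the residual ambiguity in a quasipolarized fibre is exactly $W_x(L^\perp)$, which is what the proposition needs and is, implicitly, the correct reading of Theorem \ref{thm moduli marked}. Two small caveats: a Weyl chamber is in general subdivided into only \emph{locally} finitely many small cones, not finitely many, though nothing in your argument depends on this; and in the final paragraph it is cleaner to say that two markings in the same $\Gamma$-orbit induce literally the same $\iota = \phi^{-1}|_L$, rather than merely isomorphic quasipolarized surfaces.
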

It follows that for any other choice of the embedding $L \hookrightarrow \Lambda$ representing $e$ and for any other choice of a small cone, the resulting moduli spaces are naturally isomorphic. In fact, if instead of $j \colon L \subset \Lambda$ we choose $g \circ j$ for some $g \in \OO(\Lambda)$, then the moduli spaces $\mathcal{M}_{L}^{qp}$ and $\mathcal{M}^{qp}_{g(L)}$ are clearly isomorphic via the map on period domains induced by $g$. If $\mathcal{C}'$ is another small cone on $L$, then the fine moduli spaces $\mathcal{M}_{L, \mathcal{C}}^{qp}$ and $\mathcal{M}_{L, \mathcal{C}'}^{qp}$ are identified after the quotient by $\Gamma$. 

The manifold $\mathcal{D}_{L^\perp}^{\pm}$ consists of at most two connected components, and it is a hermitian domain of type IV. The group $\OO^*(L^\perp) \cong \Gamma$ is an arithmetic subgroup, and from Baily-Borel it follows that $\mathcal{F}_{L,e}$ is a quasi-projective normal variety with at most two connected components. We remark that $\OO^*(L^\perp)$ is not neat in general, as it may contain torsion, such as reflections. 

The definition of $L$-quasipolarized K3 surfaces can also be extended over families. Let $S/ \C$ be a noetherian scheme, and let $\pi \colon X \rightarrow S$ be a smooth projective family of K3 surfaces. Let $\Pic(X/S)$ be the relative Picard sheaf of the family (see \cite{zbMATH05518937}[Chapter 3]) and let $L_S$ denote the locally constant sheaf on $S$ induced by the lattice $L$.
\begin{defi}
We say that $X/S$ is $(L,e)$-quasipolarized if there an embedding of sheaves $\iota \colon L_S \hookrightarrow \Pic(X/S)$ which is primitive and $(L,e)$-quasipolarized for every geometric fiber of $S$. 
\end{defi}
To show that this is well defined, denote by $j_s \colon L \hookrightarrow \Pic(\mathcal{X}_s)$ the induced primitive embedding for any geometric point $s \in S(\C)$. Since $S$ is connected the class $[L \hookrightarrow \Pic(\mathcal{X}_s) \hookrightarrow H^2(\mathcal{X}_s, \Z(1)) ] \in \mathcal{E}(L)$ does not depend on the chosen $s$. Note also that a class $\ell \in \mathcal{C}$ that is big and nef on one fiber, it is big and nef on every fiber. Using the argument of \cite[Rmk. 3.4]{zbMATH00953753}, one constructs a unique map $S \rightarrow \mathcal{F}_{L,e}$ such that $s \mapsto [(\mathcal{X}_s, \iota_s)]$, showing that $\mathcal{F}_{L,e}$ coarsely represents the associated moduli functor. Finally, we get rid of the embedding $e$:
\begin{defi}
A $L$-quasipolarized K3 surface is a pair $(X, \iota)$ where $X$ is a complex K3 surface and $\iota$ is a quasipolarized embedding $L \hookrightarrow \NS(X)$. 
\end{defi}
The space $\mathcal{F}_{L} \coloneqq \bigsqcup_{e \in \mathcal{E}(L)} \mathcal{F}_{L,e}$ is then the coarse moduli space of $L$-quasipolarized K3 surfaces. The following also is well-known, but we prove it nevertheless due to a lack of references
\begin{prop} \label{thm defined Q}
The space $\mathcal{F}_{L}$ has a natural model over $\Q$.
\end{prop}
\begin{proof}
The correct way to prove this would be to follow Rizov's steps \cite{2005math......8018R}, which would also yield a model of the moduli space over some open subset of $\Spec(\Z)$, but the argument would be too long for this paper, so we follow some different but standard techniques. For a prime number $p>2$ coprime to $\mathrm{discr}(\NS(X))$ consider the group $$\Gamma_p = \ker ( \Gamma \rightarrow \OO(T \otimes \FF_p) )$$ where $\Gamma$ is as in Proposition \ref{prop moduli} and $T = L^\perp$. Note that $\Gamma_p$ is neat, so that $\mathcal{F}_p = \mathcal{D}_T/ \Gamma_p$ is smooth and quasi-projective. It follows from standard arguments that one has a family of $(L,e)$-quasipolarized K3 surfaces over $\mathcal{F}_p$, endowed with an isometry $\ell \colon \iota(L)^\perp \xrightarrow{\sim} T \otimes \FF_p$. Rigidity arguments (see \cite{MR791585} and \cite{MR3656469}) ensure that the variety $ \mathcal{F}_p$ can be defined over $\overline{\Q}$. Then, \cite[Thm.3.21]{MR2192012} says that $\Aut(\mathcal{F}_p)$ is finite, so that every model over $\overline{\Q}$ is $\overline{\Q}$-isomorphic. This also means that there is a canonically defined subset in $ \mathcal{F}_p (\C)$ of algebraic points. We denote this $\overline{\Q}$-model by $\mathcal{F}_p$, so that the original one over $\C$ becomes $\mathcal{F}_{p,\C}$. Let $(X, \iota)$ be a complex $(L,e)$-quasipolarized K3 surface and let $x \in \mathcal{F}_p(\C)$ be any point that maps to the period point of $(X, \iota)$. We want to show that $X$ is defined over $\overline{\Q}$ if and only if $x$ maps to $ \mathcal{F}_p (\overline{\Q})$. One direction is obvious due to the existence of the family over $\mathcal{F}_{p,\C}$, for the other direction we use \cite[Lem.4.3]{MR3951650} which says that a K3 surface is defined over $\overline{\Q}$ if and only if the set $\{X^\sigma \colon \sigma \in \Aut(\C / \overline{\Q}) \}  = \{X\}$ (otherwise, it is uncountably infinite). Since the same characterization holds for the algebraic points in $\F_p(\C)$, the claim is proved. The group $\Gamma_p$ is normal in $\Gamma$, so that $\Gamma/ \Gamma_p$ acts on $\mathcal{F}_p$. Taking the quotient yields a normal quasiprojective variety which is a $\overline{\Q}$-form of $\mathcal{F}_{L,e}$. We denote this variety by $\mathcal{F}_e$. Then, $\mathcal{F}_e(\overline{\Q})$ parametrizes isomorphism classes of $(L,e)$-quasipolarized K3 surfaces that can be defined over $\overline{\Q} \subset \C$ (note that we always work with a fixed $\C$). In order to show that this is also a coarse moduli space, one notes that for any family of $(L,e)$-quasipolarized K3 surfaces $\mathcal{X}/S$ where $S / \overline{\Q}$ is a noetherian base, one has an induced map $S_\C \rightarrow \mathcal{F}_{e,\C}$ and this is defined over $\overline{\Q}$ because $S(\overline{\Q}) \subset S(\C)$ must be mapped to $\mathcal{F}_e(\overline{\Q})$. Define $\mathcal{F}_L = \bigsqcup_e \mathcal{F}_{L,e}$. In order to conclude the proof we need to show that $\mathcal{F}_L$ descends to $\Q$. There is a natural $G_\Q$-action on $\mathcal{F}_L(\overline{\Q})$ defined as follows: for any $\sigma \in G_\Q$ and any $(X, \iota) \in \mathcal{F}_L(\overline{\Q})$ let $(X, \iota)^\sigma = (X^\sigma, \sigma_* \circ \iota)$ where $\sigma_* \colon \NS(X) \rightarrow \NS(X^\sigma)$ is the natural induced map. Note that $\sigma_*$ behaves as if it was induced by an isomorphism on N\'{e}ron-Severi groups because it sends ample classes to ample class, which implies that also  $(X, \iota)^\sigma$ is quasipolarized. At the level of points of $\mathcal{F}_L(\overline{\Q})$, we denote this by $x \mapsto \sigma(x)$. Let $\sigma \in G_\Q$ and consider the space $\mathcal{F}^\sigma_L$ together with the map $\sigma_{\mathcal{F}} \colon \mathcal{F}_L \rightarrow  \mathcal{F}^\sigma_L$. For any family $\mathcal{X}/S$ let $p_S \colon S \rightarrow \mathcal{F}_e$ be the period map and consider the composition $$S \xrightarrow{\sigma^{-1}_S} S^{\sigma^{-1}} \xrightarrow{p_{S^{(\sigma^{-1})}}} \mathcal{F}_L \xrightarrow{\sigma_\mathcal{F}} \mathcal{F}^\sigma_L.$$ This induced map is $\overline{\Q}-$linear and exhibits $\mathcal{F}^\sigma_L$ as a coarse moduli space of $L$-quasipolarized K3 surfaces. Since coarse moduli spaces are unique up to unique isomorphism, we obtain a unique isomorphism $\pi^\sigma \colon \mathcal{F}^{\sigma}_L \xrightarrow{\sim} \mathcal{F}_L$. We leave to the reader to check that this defines a Galois descent datum in the sense of \cite[Prop.4.4.4.]{poonen2017rational}. By \cite[Cor.4.4.6.]{poonen2017rational} and \cite[Rmk. 4.4.8.]{poonen2017rational} the space $\mathcal{F}_L$ is defined over $\Q$. We call this model again by $\mathcal{F}_L$. Let $(X, \iota)$ be a $L$-quasipolarized K3 surface over $\overline{\Q}$ and let $x$ be its period point, $x=p(X, \iota)$. Then, its period point for $\mathcal{F}^\sigma$ is given by applying $\sigma_\mathcal{F}$ to $\sigma^{-1}(x)$, where the latter action is the one described in the paragraph above. The composition $$\mathcal{F}_L(\overline{\Q}) \xrightarrow{\sigma_\mathcal{F}} \mathcal{F}^\sigma_L(\overline{\Q})  \xrightarrow{\pi^\sigma } \mathcal{F}_L(\overline{\Q})$$ describes by construction the Galois action on $\mathcal{F}_L(\overline{\Q})$, and it gives back the natural $G_\Q$-action on $\mathcal{F}_L(\overline{\Q})$.    
\end{proof}
The proposition also shows that for any number field $K$ we have $$\mathcal{F}_L(K) = \{ (X, \iota) \in \mathcal{F}_L(\overline{\Q}) \colon (X^\sigma, \iota^\sigma) \cong (X, \iota) \text{ for every } \sigma \in G_K \}.$$ 

\begin{rmk}
Although $\mathcal{F}_L$ has usually one or two connected components, there are still instances in which the set $\mathcal{E}(L)$ is not trivial (see subsection \ref{sec conn comp} for a discussion on connected components). In this case, it would be interesting to know how $G_\Q$ acts on such components. For a concrete example, take any K3 surface $X/ \overline{\Q}$ and $\sigma \in G_\Q$ such that $T(X)$ and $T(X^\sigma)$ are not isometric (once we base change both surfaces to $\C$). In this case, $\NS(X) \cong \NS(X^\sigma)$ but the two embeddings $\NS(X) \hookrightarrow \Lambda$ and  $\NS(X^\sigma) \hookrightarrow \Lambda$ cannot be isomorphic, since their orthogonal complements are not isometric. We can use the result of Sch\"{u}tt \cite{MR2602669} to provide many such examples of Picard rank $20$. On the other hand, this is not completely satisfactory because the transcendental lattices involved are positive definite. One may still wonder whether $G_\Q$ acts always transitively on $\mathcal{E}(L)$ when $L$ is not definite.
\end{rmk}
\subsection{Hodge theoretic interpretation} \label{rmk independence embedding}
We begin with the following definition. Let $T$ be an even lattice of signature $(2_+, n_-)$.

\begin{defi} \label{defi T polarized}
A $T$-polarized K3 structure is a tuple $(T', \gamma')$ where $T'$ is an even lattice in the same genus of $T$ endowed with a Hodge structure of weight zero of K3 type, for which the quadratic form of $T'_\Q$ induces a polarization, and $\gamma' \colon A_{T} \xrightarrow{\sim} A_{T'}$ is an isomorphism of finite quadratic forms. Another tuple $(T'', \gamma'')$ is isomorphic to the first one, if there is an integral Hodge isometry $f \colon T' \rightarrow T''$ such that, if $\bar{f} \colon A_{T'} \rightarrow A_{T''}$ denotes the natural induced isometry on the discriminant forms, then $\gamma' = \gamma'' \circ \bar{f}$. 
\end{defi}

 Let $\mathcal{K}^*(T)$ be the set of all isomorphism classes of $T$-polarized K3 structures. We define $\pi_0(\mathcal{K}^*(T))$ as the isomorphism classes of pairs $(T', \gamma')$ where $T'$ now has no Hodge structure, but it is only an even lattice in the same genus of $T'$ (the rest of the definition remains the same). 

\begin{lemma}
Let $L$ be an even hyperbolic lattice, $f \colon L \hookrightarrow \Lambda$ a primitive embedding, and $T$ its orthogonal complement. There is a natural isomorphism $\pi_0(\mathcal{K}^*(T)) \cong \mathcal{E}(L)$.
\end{lemma}

\begin{proof}
Due to \cite[Prop.1.6.1]{MR525944} we can identify $\mathcal{E}(L)$ with isomorphism classes of pairs $(T', \rho')$ where $\rho' \colon A_{T'} \xrightarrow{\sim} -A_L$ is an isometry and $T'$ is a lattice in the same genus as $T$. Two such pairs give isomorphic primitive embeddings if and only if there is an isometry $f \colon T' \rightarrow T''$ such that $\rho' = \rho'' \circ \bar{f}$. Using the natural isomorphism $g \colon A_T \xrightarrow{\sim} -A_L$ induced by the identification $T = L^\perp$ we can associate to any $(T', \gamma') \in \mathcal{K}_0^*(T)$ the element $(T', g \circ \gamma^{-1}) \in \mathcal{E}(L).$ This is a bijection.
\end{proof}
One has a natural surjective map $\mathcal{K}^*(T) \rightarrow \pi_0(\mathcal{K}^*(T))$. We now show that $\mathcal{F}_L(\C)$ also parametrizes $T$-polarized Hodge structures of K3 type:
\begin{prop}
Let $L$ be an even hyperbolic lattice with a primitive embedding $L \hookrightarrow \Lambda$ such that $L^\perp = T$. There is a natural identification $$\mathcal{K}^*(T) \cong \mathcal{F}_L(\C)$$ such that the following square commutes:
\begin{center}
     \begin{tikzcd}
  \mathcal{K}^*(T) \arrow[r, "\sim"] \arrow[d]
    & \mathcal{F}_L(\C) \arrow[d] \\
  \mathcal{K}_0^*(T) \arrow[r, "\sim"]
&  \mathcal{E}(L). \end{tikzcd}
\end{center}
\end{prop}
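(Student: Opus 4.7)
The plan is to build mutually inverse maps $\Phi$ and $\Psi$ between $\mathcal{F}_L(\C)$ and $\mathcal{K}^*(T)$ and verify compatibility with the vertical projections. First I would define $\Phi \colon \mathcal{F}_L(\C) \to \mathcal{K}^*(T)$ by sending an $L$-quasipolarized K3 surface $(X, \iota)$ to the pair $(T_X, \gamma_X)$ with $T_X \coloneqq \iota(L)^\perp \subset H^2(X, \Z(1))$ equipped with its induced weight-zero K3-type Hodge structure polarized by the restriction of the intersection form, and with $\gamma_X \colon A_T \xrightarrow{\sim} A_{T_X}$ defined as the composition of (a) the tautological identification $A_T \cong -A_L$ arising from the fixed primitive embedding $L \hookrightarrow \Lambda$, (b) the isomorphism $A_L \cong A_{\iota(L)}$ induced by $\iota$, and (c) the Nikulin isomorphism $-A_{\iota(L)} \cong A_{T_X}$ furnished by \cite{MR525944}[Proposition 1.6.1] applied to the unimodular lattice $H^2(X, \Z(1)) \cong \Lambda$. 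Since $T$ and $T_X$ have the same signature and isomorphic discriminant forms they lie in the same genus, so $(T_X, \gamma_X) \in \mathcal{K}^*(T)$; an isomorphism of $L$-quasipolarized K3 surfaces induces a Hodge isometry of transcendental lattices compatible with these gluings, so $\Phi$ descends to isomorphism classes.

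For the inverse $\Psi \colon \mathcal{K}^*(T) \to \mathcal{F}_L(\C)$, I would attach to $(T', \gamma')$ the even overlattice $\Lambda_{(T',\gamma')} \supset L \oplus T'$ determined by \cite{MR525944}[Proposition 1.4.1] via the isotropic graph of the composite isometry $A_L \cong -A_T \xrightarrow{-\gamma'} -A_{T'}$. This overlattice is unimodular of signature $(3_+, 19_-)$, hence isomorphic to $\Lambda$; after fixing such an isomorphism and extending the Hodge structure on $T'$ by declaring $L$ to be of type $(0,0)$, we obtain a K3-type Hodge structure on $\Lambda$ containing $L$ in $\Lambda^{0,0}$. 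The lemma in Subsection \ref{rmk independence embedding} then produces a well-defined isomorphism class of $L$-quasipolarized K3 surfaces from this Hodge-theoretic datum, the small cone $\mathcal{C}$ being used precisely to remove the Weyl-group ambiguity.

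Both compositions $\Phi \circ \Psi$ and $\Psi \circ \Phi$ are the identity because the gluing construction of $\Psi$ and the splitting construction of $\Phi$ are inverse operations in Nikulin's discriminant form correspondence. Compatibility of the square is immediate: the construction of $\Lambda_{(T',\gamma')}$ coincides with the one used in the preceding lemma, so the image of $(T', \gamma')$ in $\mathcal{K}_0^*(T)$ is sent to the class of the embedding $L \hookrightarrow \Lambda_{(T', \gamma')} \cong \Lambda$ in $\mathcal{E}(L)$, which is by construction the component of $\mathcal{F}_L$ hit by $\Psi(T', \gamma')$. The main obstacle is ensuring that $\Psi$ is well-defined on isomorphism classes of pairs rather than on individual representatives: two equivalent choices of $(T', \gamma')$ produce isomorphic overlattices and hence, after the quotient in Proposition \ref{prop moduli} that collapses the residual action of the group $\Gamma = \{g \in \OO(\Lambda) \colon g|_L = \id \}$, isomorphic $L$-quasipolarized K3 surfaces. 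This is exactly the content of Subsection \ref{rmk independence embedding}, so no machinery beyond Nikulin's discriminant calculus and the results already assembled in Section \ref{section moduli} should be needed.
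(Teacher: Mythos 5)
Your proposal is correct and follows essentially the same route as the paper: the forward map sends $(X,\iota)$ to $(\iota(L)^\perp,\gamma_X)$ with $\gamma_X$ built from the Nikulin identification $A_{\iota(L)^\perp}\cong -A_L\cong A_T$, and the inverse reconstructs $\Lambda$ as the overlattice of $L\oplus T'$ glued along the isotropic graph, endows it with the Hodge structure from $T'$, and invokes the lemma of Subsection \ref{rmk independence embedding} to produce the $L$-quasipolarized K3 surface. The additional details you supply (well-definedness on isomorphism classes, commutativity of the square with $\mathcal{K}_0^*(T)\cong\mathcal{E}(L)$) are exactly the points the paper leaves implicit.
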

\begin{proof}
To any $L$-quasipolarized K3 surface $(X, \iota)$ we let $(\iota(L)^\perp, \gamma_X)$ be the pair where $\gamma_X$ is given by the composition $A_{\iota(L)^\perp} \cong -A_L \cong A_T.$ It is easy to see that this defines an injection $\mathcal{F}_L(\C) \hookrightarrow \mathcal{K}^*_T$. 

To go the other way around, choose any $(T', \gamma) \in \mathcal{K}^*_T$. We can glue the direct sum $T' \oplus L$ into the K3 lattice, using the isomorphism $A_{T'} \cong -A_L$ induced by $\gamma$. We then endow the K3 lattice $\Lambda$ with this Hodge structure and, by the surjectivity of the period map and the global Torelli theorem, we obtain a unique K3 surface $X$ together with an isometry $f \colon H^2(X, \Z(1)) \xrightarrow{\sim} \Lambda$. Now, choose a small cone $\mathcal{C}$ on $L$, and consider the embedding $L \hookrightarrow H^2(X, \Z(1))$ given by the composition of $L \subset L \oplus T' \subset \Lambda$ and $f^{-1}$. Pick any $w \in W(L)$ such that $\iota \coloneqq j \circ w$ is quasipolarized with respect to $\mathcal{C}$ and consider the corresponding $L$-quasipolarized K3 surface $(X, \iota)$. We need to show that $(X, \iota)$ does not depend on these choices (the K3 surface $X$ is not naturally associated to the Hodge structure, and $f$ is arbitrary). Let $(X', \iota')$ be another $L$-quasipolarized K3 surface obtained via the same procedure. Choose any Hodge isometry $f \colon H^2(X, \Z) \rightarrow H^2(X', \Z)$ such that $f \circ \iota = \iota'$. Assume that $L^\perp$ does not contain $(-2)$-classes, and let $\delta \in \NS(X)$ be any effective $(-2)$-class and $c \in \mathcal{C}$ be any element. Then $(\iota(c), \delta) >0$ and therefore $(\iota'(c), f(\delta) >0$. This shows that $f$ is induced by a unique isomorphism (by the global Torelli) of $L$-quasipolarized K3 surfaces. If $L^\perp$ contains $(-2)$-classes, then $f$ is not necessarily induced by an isomorphism. On the other hand, we can pick any element $w$ in the finite group $W(L^\perp) \subset \OO(H^2(X, \Z))$ such that $f \circ w$ is induced by a unique isomorphism. In this case, the two $L$-quasipolarized K3 surfaces are also isomorphic.
\end{proof}
Thus, in the end, one could rephrase the notion of $L$-polarized K3 surface as follows:
\begin{defi}
An $L$-polarized K3 surface is a pair $(X, \iota)$ where $X$ is a K3 surface and $\iota \colon L \hookrightarrow \NS(X)$ is a primitive embedding of lattices (without any ampleness condition). Two pairs $(X, \iota)$, $(X', \iota')$ are isomorphic if there exists an isometry of Hodge structures $f \colon H^2(X, \Z(1)) \xrightarrow{\sim} H^2(X', \Z(1))  $ such that $f \circ \iota = \iota'$.
\end{defi}
The two notions coincide but what we have said until now, and the advantage of the definition above is that it avoids all the discussion about small cones. 

On $\mathcal{K}^*_T$ there is a natural action of $\OO(A_T)$, given by $  (T', \gamma') \cdot \gamma \coloneqq (T',  \gamma' \circ \gamma)$ for any $\gamma \in \OO(A_T)$. The corresponding action on $\mathcal{F}_L$ is also defined over $\Q$. 
\begin{cor}
    The quotient of $\mathcal{M}_T/ \OO(A_T)$ parametrizes Hodge structures $T'$ where $T'$ is in the same genus as $T$. 
\end{cor}
In particular, since when $\rho(X) \geq 12$ the transcendental lattice of $X$ determines its isomorphism class, we obtain
\begin{cor}
    For any $L$ with $\mathrm{rank}(L) \geq 12$ the Picard generic points of $\mathcal{F}_L/ \OO(A_L)$ are in one-to-one correspondence to isomorphism classes of K3 surfaces $X$ with $L \cong \NS(X)$.
\end{cor}
\subsection{Double quotient interpretation} 
 Let $T$ be any even lattice of signature $(2_+, n_-)$, and let $\mathcal{D}^{\pm}_T \subset \Pp(T_\C)$ be the associated period domain. Let $\OO_{T}^*(\widehat{\Z})$ be the kernel of $\OO_T(\widehat{\Z}) \rightarrow \OO(A_L).$ Consider the following double quotient \begin{equation}
    \mathcal{M}_T \coloneqq \OO_T(\Q) \backslash \mathcal{D}_{T}^\pm \times \OO_T(\A_f) / \OO_{T}^*(\widehat{\Z}),
\end{equation}
where the action of $q \in \SO_T(\Q)$ is given by $q(\sigma, g) = (q\sigma, q g)$ and the action of $K$ is the right multiplication on the right coordinate. The product can be taken both with $\OO(\A_f)$ endowed with the adelic topology, or with the discrete topology: this does not make a difference when one goes to the quotient.
Note that $\mathcal{M}_T$ is not a Shimura variety because $\OO_T$ is not connected. We now show that $\mathcal{M}_T$ is isomorphic to $\mathcal{F}_L$ when $L$ embeds primitively into the K3 lattice with orthogonal complement isomorphic to $T$, by showing that also $\mathcal{M}_T$ parametrizes $T$-polarized Hodge structures of K3-type.
\begin{lemma}
There is a natural isomorphism $\pi_0(\mathcal{K}^*(T)) \cong \OO_T(\Q) \backslash \OO_T(\A_f) / \OO_{T}^*(\widehat{\Z})$. 
\end{lemma}

\begin{proof}
Consider $g \in \OO_T(\A_f)$. Then, $T^g \coloneqq g(T \otimes \widehat{\Z}) \cap T_\Q$ is the unique lattice that satisfies $T^g \otimes \widehat{\Z} = g(T \otimes \widehat{\Z})$ where the equality is taken in $T \otimes\A_f$. The map $g$ also induces a natural isomorphism $\gamma_g \colon A_T \rightarrow A_{T^g}$, so we obtain a map $\OO(\A_f) \rightarrow \pi_0(\mathcal{K}^*(T))$ sending $g$ to $(T^g, \gamma_g)$. The surjectivity of this map is due to the fact that $\OO(\widehat{\Z}) \rightarrow \OO(A_T)$ is surjective \cite{MR525944}[Cor.1.9.6] and it is straightforward to check that it induces a bijection between the double coset and $\pi_0(\mathcal{K}^*(T))$. 
\end{proof}
\begin{prop} \label{prop M and K}
For any even lattice $T$ of signature type $(2_+, n_-)$ there is a natural identification $$\mathcal{M}_T(\C) \cong \mathcal{K}^*(T)$$ and a natural isomorphism $$\mathcal{M}_T(\C) \cong \bigsqcup_{(T',\gamma') \in \pi_0(\mathcal{K}^*(T))} \mathcal{D}_{T'}^{\pm}/\OO^*(T').$$
\end{prop}
\begin{proof}
We put $V = T_\Q$. Let $(\sigma, g) \in \mathcal{D}_{T}^{\pm}\times \OO(\A_f)$ and denote by $[\sigma, g] \in \mathcal{M}_T(\C)$ its equivalence class. Denote by $V_\sigma$ the polarized Hodge structure induced by $\sigma \in \mathcal{D}_T \subset \Pp(T_\C)$, that is, $V^{1,-1} = \sigma$, $V^{-1,1} = \bar{\sigma}$ and $V^{0,0} \cap V_\R =  ((\sigma \oplus \bar{\sigma}) \cap T_\R)^\perp$. If there is a Hodge isometry $f \colon V_\sigma \rightarrow V_\tau$ then $f$ is an element of $\OO(\Q)$ such that $f(\sigma) = \tau$. Consider now the unique lattice $T^g \subset V$ such that $T^g \otimes \widehat{\Z} = g(T \otimes \widehat{\Z})$ with equality occurring in $T \otimes \A_f$. Denote by $T^g_\sigma$ the integral Hodge structure induced by the inclusion $T^g \subset V_\sigma$. Note that $g$ induces an isomorphism $\gamma_g \colon A_{T} \rightarrow A_{T^g}$, so we obtain an element $(T^g_\sigma, \gamma_g) \in \mathcal{K}^*(T)$. Clearly $T^{gk} = T^{k}$ for any $k \in \OO_T(\widehat{\Z})$ and $\gamma_{gk} = \gamma_g$ for any $k \in \OO_T^*(\widehat{\Z}).$ This shows that the map is well defined. To construct the inverse, one picks an element $(T', \gamma')$ and one chooses any isometry $T'_\Q \cong V$. The image of $T' \subset V$ is a lattice in the same genus of $T$, because $T'$ has the same signature and the same discriminant form of $T$. We can find both a Hodge structure $\sigma$ and an element $g \in \OO(\A_f)$ such that $T' \cong T^g_{\sigma}$ as integral polarized Hodge structures. We only need to show that we can choose $g$ such that $(T^g_\sigma, \gamma_g) \cong (T', \gamma')$. This follows from the fact that the map $\OO(\widehat{\Z}) \rightarrow \OO(A_L)$ is surjective \cite{MR525944}[Cor.1.9.6.] and from the fact that for any $h \in \OO(\widehat{\Z})$ one has $T^{gh} = T^{g}$. These two constructions are one the inverse of the other. 

We now prove the last part of the proposition. The discussion above shows how to any $[\sigma, g] \in \mathcal{M}_T(\C)$ we can associate an element $(T^{g}, \gamma_g) \in \pi_0(\mathcal{K}^*(T)).$ Assume that for another $[\sigma', g']$ we have that $(T^{g'}, \gamma_{g'}) \cong (T^g, \gamma_g)$. Note that we are now considering only lattices, without Hodge structures. This means that we can find $q \in \OO(\Q)$ and $k \in \OO^*(\widehat{\Z})$ such that  $qg' k = g$; hence, the fiber of the map $\mathcal{M}_T(\C) \rightarrow \pi_0(\mathcal{K}^*(T))$ are represented by $(\sigma, g)$ for a fixed $g \in \OO(\A_f)$ and $\sigma$ varying in $\mathcal{D}^{\pm}_T$. Now, two elements $(\sigma, g)$ and $(\sigma', g)$ map to the same one in $\mathcal{M}_T$ if and only if there is $q \in \OO(\Q)$ and $k \in \OO^*(\widehat{\Z})$ such that $\sigma' = q \sigma$ and $qgk = g$, i.e., if and only there is $q \in \OO_T(\Q)$ such that $g^{-1}qg \in \OO_{T}^*(\widehat{\Z})$ and $\sigma' = q\sigma$. We can view $g$ as an isometry between $T \otimes \widehat{\Z}$ and $T^g \otimes \widehat{\Z}$, so the equation above says that $q$ must act integrally on $T^g$, i.e., $q \in \OO_{T^g}(\Z)$. Moreover, since $k$ acts trivially on $A_T$ one can deduce from the same equation that $q \in \OO_{T^g}^*(\Z)$. Now, note that for any $q \in \OO_{T^g}^*(\Z)$ there is $k \in \OO_T^*(\widehat{\Z})$ such that $qgk =g$, simply by putting $k = gqg^{-1}$. So the fiber over $(T^{g}, \gamma_g) \in \pi_0(\mathcal{K}^*(T)) $ is naturally isomorphic to $ \mathcal{D}_{T}^{\pm}/\OO^*(T^g)$.
\end{proof}
Putting together the previous points, we have arrived at the following. Let $L$ be an even hyperbolic lattice, and fix a primitive embedding $L \subset \Lambda$ with orthogonal complement $T$. Then, the choice of such embedding induces a natural isomorphism $\mathcal{E}(L) \cong \pi_0(\mathcal{K}^*(T))$ and $\mathcal{F}_L \cong \mathcal{M}_T$ such that the following diagram commutes 
\begin{equation} \label{final diagram comparison}
     \begin{tikzcd}
  \mathcal{M}_T \arrow[r, "\sim"] \arrow[d]
    & \mathcal{F}_L(\C) \arrow[d] \\
  \mathcal{K}_0^*(T) \arrow[r, "\sim"]
&  \mathcal{E}(L). \end{tikzcd}
\end{equation}
commutes. 

\subsubsection{Connected components} \label{sec conn comp}
We comment more on the connected components of $\mathcal{F}_L$. There are two reasons why $ \pi_0(\mathcal{K}^*_T)$ may be non-trivial. The first reason is that $T$ is not unique in its genus, and the second is that $\OO(T) \rightarrow \OO(A_T)$ is not surjective. Denote by $\overline{\OO}(T) \subset \OO(A_T)$ its image. 
\begin{lemma}
    Assume that $T$ is unique in its genus. Then  $|\pi_0(\mathcal{K}^*_T)| = [ \OO(A_T) \colon \overline{\OO}(T)]$. 
\end{lemma}

\begin{proof}
In fact, any pair $(T', \gamma')$ we can find an isometry $T \xrightarrow{\sim} T'$. This means that each element is represented by one of the form $(T, \gamma)$ with $\gamma \in \OO(A_T)$, and then the proof is obvious. 
\end{proof}
\begin{cor}
    Assume that $T$ is unique in its genus. Then the connected components of $\mathcal{F}_L$ are permuted by the natural action of $\OO(A_L)$ on  $\mathcal{F}_L$. 
\end{cor}
We remark that most of the indefinite lattices have trivial genus (that is, consisting of one isometry class). As originally done by Eichler, one decomposes the genus of $T$ as a disjoint union of spinor genera. Then, Eichler showed that whenever $\mathrm{rank}(T) \geq 4$ and $T$ is indefinite, there is only one isometry class in each spinor genus. Later, Kneser \cite{zbMATH03119392} showed that the number of different spinor genera in the genus of $T$ is always a power of $2$, and most of the time, this power is $1$. For instance, for any prime $p$, consider the $p$-adic lattice $T_p = T \otimes \Z_p$, and write $T_p = U_0 \perp U_1(p) \perp \cdots \perp U_k(p^k)$ where each $U_i$ is unimodular and $U_i(p^i)$ is the lattice $U_i$ scaled by $p^i$. Let $A_{T_p} = A_T \otimes \Z_p$ be the $p$-part of the discriminant form of $T$. Then $$A_{T_p} = \bigoplus_{i} (\Z/ p^i \Z)^{\mathrm{rank}(U_i)}.$$
\begin{prop}(\cite{zbMATH03613145}[Cor.of Lem. 3.7.])
    Assume that $\mathrm{discr}(T)$ is odd and that the genus of $T$ contains more than one spinor genus. Then $\mathrm{rank}(U_i) \in  \{1,0 \}$ for each $i$ for at least one prime $p$ which divides $\mathrm{discr}(T)$.
\end{prop}
See the next corollary in \textit{loc.cit} for the case $p=2$. Finally, the following follows from Nikulin results:
 \begin{lemma} \label{lemma conn comp M_T}
     Assume that a lattice $T$ of signature $(2_+,n_-)$ satisfies $\ell(A_T) \leq \mathrm{rank}(T) -2$. Then $\mathcal{M}_T$ has at most two connected components. If moreover $\ell(A_T) \leq \mathrm{rank}(T) -3$ then it is irreducible. 
 \end{lemma}
\begin{proof}
The first statement is due to \cite[Thm.1.14.2]{MR525944}. Regarding the second, since $\ell(A_T) \leq \mathrm{rank}(T) -3$ there is an orthogonal decomposition $T= U \oplus T'$ where $U$ is the hyperbolic plane \cite{MR525944}[Cor.1.13.5]. Let $i \colon T \rightarrow T$ be the map induced by $-\Id$ on $U$ and $\Id$ on $T'$. Then $i$ clearly exchanges $\mathcal{D}^+$ with $\mathcal{D}^{-}$.
\end{proof}

\subsection{Fields of moduli and rigidity}
Let $(X, \iota)$ be a $L$-quasipolarized K3 surface over $\overline{\Q}$. The field generated by the point in $\mathcal{F}_L(\overline{\Q})$ representing $(X, \iota)$ is called the field of moduli of $(X, \iota)$. This can also be defined as the fixed field of $$\{ \sigma \in G_\Q \colon (X, \iota) \cong (X^\sigma, \iota^\sigma) \} \subset G_\Q.$$
In general, the field of moduli is smaller than (any) field of definition. For a lattice polarized K3 surface, most of the times the two fields coincide due to the scarcity of automorphisms. In general, an argument of Galois descent shows that the two fields coincide whenever the parametrized object has no non-trivial automorphisms. Denote by $\mu(X)$ the set of Hodge isometries of $T(X_\C)$. Note that this corresponds to the roots of unity in the field $E_X = \End_{\Hdg}(T(X_\C)_\Q)$ and, in particular, $\mu(X) = \{ \pm 1 \}$ whenever $E_X$ is totally real. 
\begin{prop} \label{prop fields of moduli}
Let $(X, \iota)$ be a $L$-quasipolarized K3 surface, and assume that $X$ can be defined over $\overline{\Q}$. Assume that $\iota$ is an isomorphism between $L$ and $\NS(X)$. If the natural map $\mu(X) \rightarrow \OO(A_X)$ is injective, then $(X, \iota)$ can be defined over its field of moduli. 
\end{prop}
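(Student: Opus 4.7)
My plan is to show that the hypothesis forces $\Aut(X, \iota) = \{1\}$, and then descend $(X, \iota)$ via Weil's descent theorem.

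First, I would identify $\Aut(X, \iota)$. An element $h \in \Aut(X, \iota)$ is an automorphism of $X$ satisfying $h^* \circ \iota = \iota$; since $\iota$ is an isomorphism $L \cong \NS(X)$, this means $h^*$ acts as the identity on $\NS(X)$. By the global Torelli theorem the map $h \mapsto h^*$ is injective, and $\phi \coloneqq h^*|_{T(X)}$ is a Hodge isometry, i.e., $\phi \in \mu(X)$. By Nikulin's gluing theorem (as used in Lemma \ref{lemma stable}), the pair $(\id_{\NS(X)}, \phi)$ extends to an isometry of the overlattice $H^2(X, \Z(1))$ precisely when the induced actions on the discriminant form match under the anti-isometry $A_{\NS(X)} \cong -A_X$, i.e., precisely when $\phi$ acts trivially on $A_X$. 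Conversely, any $\phi \in \ker(\mu(X) \to \OO(A_X))$ glues with $\id_{\NS(X)}$ to a Hodge isometry $\Phi$ of $H^2(X, \Z(1))$; since $\Phi$ fixes $\NS(X)$ pointwise, it preserves the K\"ahler cone $\mathcal{K}_X \subset \NS(X)_\R$, and by strong Torelli is induced by an automorphism of $X$ lying in $\Aut(X, \iota)$. Thus $\Aut(X, \iota) \cong \ker(\mu(X) \to \OO(A_X))$, which is trivial by hypothesis.

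Second, I would apply Weil descent. Let $K_0$ denote the field of moduli of $(X, \iota)$. For every $\sigma \in G_{K_0}$ the set of isomorphisms $(X, \iota) \to (X^\sigma, \iota^\sigma)$ is non-empty by definition of $K_0$ and, thanks to $\Aut(X, \iota) = \{1\}$, consists of a unique element $h_\sigma$. Both $h_{\sigma\tau}$ and $h_\tau^\sigma \circ h_\sigma$ are isomorphisms $(X, \iota) \to (X^{\sigma\tau}, \iota^{\sigma\tau})$ (a direct check using the defining condition $h_\sigma^* \circ \iota^\sigma = \iota$ and its Galois conjugate), hence by uniqueness they coincide, yielding the cocycle identity. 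So $\{h_\sigma\}_{\sigma}$ is a Galois descent datum for the pair $(X, \iota)$, and it is effective because $\iota$ provides a Galois-equivariant ample class (the image under $\iota$ of any ample element in $\mathcal{C} \cap L$); the standard effective descent for polarized varieties, as in \cite{poonen2017rational}[Cor.~4.4.6] (already invoked in the proof of Theorem \ref{thm defined Q}), then yields a model of $(X, \iota)$ over $K_0$.

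The main obstacle is Step 1: one has to track the sign conventions in Nikulin's gluing theorem carefully so that the extensibility of $(\id_{\NS(X)}, \phi)$ to $H^2(X, \Z(1))$ translates exactly into the condition that $\phi$ act trivially on $A_X$, and to ensure that the resulting Hodge isometry truly preserves K\"ahler classes (which is immediate here since $\mathcal{K}_X$ is fixed pointwise). Once the automorphism group is seen to be trivial, the descent conclusion is a formal consequence of the uniqueness of $h_\sigma$.
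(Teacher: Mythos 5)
Your proposal is correct and follows essentially the same route as the paper: both arguments reduce to showing that an automorphism (equivalently, the ambiguity between two isomorphisms) acting trivially on $\NS(X)$ induces an element of $\mu(X)$ that acts trivially on $A_X$ — via the natural identification $A_{\NS(X)} \cong -A_{T(X)}$ — and is therefore trivial by hypothesis, after which the unique isomorphisms $f_\sigma \colon X \to X^\sigma$ assemble into an effective Galois descent datum over the field of moduli. Your detour through Nikulin's gluing criterion and the explicit effectivity remark via the equivariant ample class are just slightly more expanded versions of the same steps the paper sketches.
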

\begin{proof}
Let $(X, \iota)$ and $(Y, \iota)$ be $(L,e)$-quasipolarized K3 surfaces, and assume that both $\iota$ and $\iota'$ are isomorphisms and that $(X,\iota) \cong (X', \iota').$ We begin by showing that there is a unique isomorphism between $(X,\iota)$ and $(X', \iota').$ Pick two isomorphisms $f,f'$ and consider $g = f' \circ \pi^{-1} \in \Aut(X)$. Then, $g$ is an automorphism of $X$ which acts trivially on $\NS(X)$ and therefore on $A_X$. Its action on $T(X)$ is given by a integral Hodge isometry, that is, by an element of $\mu \in \mu(X)$. Since the action of $\mu$ on $A_X$ must be the identity due to the fact that $T(X)^\vee/T(X)$ and $\NS(X)^\vee/\NS(X)$ are naturally isomorphic, we conclude thanks to the assumption that $\mu(X) \rightarrow \OO(A_X)$ is injective. Thus, for any $\sigma \in G_\Q$ such that  $(X, \iota) \cong (X^\sigma, \iota^\sigma)$ there is a unique isomorphism $f_\sigma$ of polarized K3 surfaces $X \rightarrow X^\sigma$. One checks that this gives a Galois descent data for $X$ over its field of moduli (see \cite{valloni2019fields} and \cite{https://doi.org/10.48550/arxiv.2206.02560} where similar techniques are used). 
\end{proof}
This automatically holds when $L$ is not $2$-elementary (that is, $A_L$ is $2$-torsion, which happens only for finitely many lattices of a given rank) and $X$ is general in the sense that $\mu(X) = \{ \pm 1 \}$ (which happens outside a proper closed subset of the moduli space). 

Our aim now is to show that this always happens outside a proper closed subset of $\mathcal{F}_L$. 
\begin{defi} \label{defi ODelta}
    For a negative definite lattice $K$, let $\Delta(K)$ be the set of roots of $K$ and choose a decomposition $\Delta(K) = \Delta^+(K) \sqcup \Delta^-(K).$ We denote by $\OO_{\Delta}(K) \subset \OO(K)$ the isometries of $K$ which respects this decomposition.
\end{defi}
This is well-defined up to conjugation by an element of the Weyl group $W(K)$. In the next, with a triple $(T,K,L)$, we shall indicate three primitive sublattices of $\Lambda$ which are pairwise orthogonal, their orthogonal sum $T \oplus K \oplus L$ is of finite index in $\Lambda$ and $T$ has signature $(2_+,n_-)$ and $L$ is hyperbolic. 
 
\begin{defi} \label{defi rigid}
  We say that $(T,K,L)$ is \textit{rigid} if every isometry of the form $\pm \mathrm{Id}_T \oplus q \oplus \mathrm{Id}_L$ with $q \in \OO_{\Delta}(K)$ is the identity (note that this is well-defined and does not depend on the chosen decomposition of $\Delta(K)$). 
\end{defi}

\begin{rmk}
Rigidity is easy to characterize when $\mathrm{rank}(K) = 1.$ In this case $\OO(K) = \mu_2$ and therefore rigidity is automatic if $K = \langle -2 \rangle$, otherwise it is implied by the fact that none of the isometries $ \pm \mathrm{Id}_T \oplus \pm \mathrm{Id}_K \oplus \mathrm{Id}_L$ other than the identity extend to an isometry of $\Lambda$. This happens in particular whenever each $A_L,A_K,A_T$ is not $2$-torsion.
\end{rmk}
Now assume that $A_K$ is not $2$-torsion and let $g \in \OO(\Lambda)$ be an isometry which extends one of the form $\pm \mathrm{Id}_T \oplus q \oplus \mathrm{Id}_L$. Let $M$ be the saturation of $T \oplus L$ in $\Lambda$. Then $M = K^\perp$ and $K = M^\perp$ since $K$ is primitive in $\Lambda$. We assume that $\OO(K) \rightarrow \OO(A_K)$ is injective (see later about this condition). Now, if $g$ extends an isometry of the form $\mathrm{Id}_T \oplus q \oplus \mathrm{Id}_L$ we see that $q$ acts as the identity on $M$, and therefore, $q$ maps to the identity of $\OO(A_K) \cong \OO(A_M)$. Thanks to the injectivity of $\OO(K) \rightarrow \OO(A_K)$ we conclude that $g$ is itself the identity. So if $g$ is not trivial it must extend an isometry of the form $- \mathrm{Id}_T \oplus q \oplus \mathrm{Id}_L$. Using the same argument, this shows that $g^2$ is the identity. We have thus proved:
\begin{prop}
    If $(X, \iota)$ is not rigid, i.e. the triple $(T(X), \NS(X)_\iota, \iota(L))$ is not rigid, then there is a non-symplectic involution $\sigma \in \Aut(X)$ which acts trivially on $\iota(L)$. 
\end{prop}
Nikulin has classified non-symplectic involutions of K3 surfaces. He showed among other things that a K3 surface $X$ has a non-symplectic involution if and only if $\NS(X)$ embeds primitively into a certain $2$-elementary lattice $L_{r, \delta, a}$, where $r$ is the rank and $a$ is the length of the discriminant form and $\delta \in \{0,1 \}$. We do not need to explain what these lattices are, we just need to know that they are finitely many. 

Now we will comment on the injectivity of the map $\OO(K) \rightarrow \OO(A_K)$ for even definite lattices. In the reasoning above, we actually only needed the weaker statement that $\OO^*(K) \cap \OO_\Delta(K) = \{1\}$.  As explained in \cite[Rmk. 1.14.7]{MR525944}, one expects that for most definite lattices one has $\OO^*(K) \cong W(K)$. More precisely, $W(K)$ is clearly contained in $\OO^*(K)$, and one has a decomposition $\OO^*(K) = \OO^*_\Delta(X) \cdot W(K)$, where $\OO^*_\Delta(K) = \OO^*(K) \cap \OO_\Delta(K)$.
\begin{defi}
The lattice $\mathcal{L}(K) \coloneqq (K^{\OO^*_\Delta(K)})^{\perp}$ is a Leech-type lattice. 
\end{defi}
It follows that if $\OO^*_\Delta(X)$ is non-trivial, then $K$ primitively contains a Leech-type lattice. The point is that Leech-type lattices are very rare:
\begin{prop}
    The only non-trivial Leech-type lattice of rank smaller than $11$ is $E_8(2)$.
\end{prop}
Using Nikulin's results, it is easy to show that Leech-type lattices are finite when the rank is bounded. 
\begin{prop}
    There are only finitely many Leech-type lattices of rank smaller than a given $N>0$. 
\end{prop}
\begin{proof}
This is basically the same proof as in Theorem 1.14.9 in \cite{MR525944}, we give some details. Pick a number $D$ which is a multiple of $8$ and bigger than $2N$. Let $\mathcal{U}$ be the finite set of isometry classes of even, negative definite unimodular lattice of rank $D$. Then by Theorem 1.12.4 of \textit{loc. cit.} each definite lattice of rank smaller than $N$ embeds primitively into some element of  $\mathcal{U}$. In particular, every Leech-type lattice $L$ as well. Then ones uses the second point of Proposition 1.14.8 in \textit{loc. cit.} to show that there is a finite subgroup $H \subset \OO(U)$ with $U \in \mathcal{U}$ such that $L \cong (U^H)^\perp$, which proves the statement due to the finiteness of $\mathcal{U}$ and of $ \OO(U)$.
\end{proof}
\begin{prop} \label{prop univ family}
    Assume that $L$ is not $2$-elementary. Then, there is a complement of a finite union of special proper subvarieties of $\mathcal{F}_L$ whose points are all rigid, and which therefore supports a universal family. 
\end{prop}
\begin{proof}
From the discussion in the next chapter, one has that if $L \subset N$ is a primitive embedding of hyperbolic lattices, then there are only finitely many components of the Noether-Lefschetz locus parametrizing K3 surfaces with $\iota(L) \subset N \subset \NS(X)$. Now, consider all the points $(X, \iota)$ such that
    \begin{enumerate}
        \item $\mu(X) = \{ \pm 1 \}$;
        \item $A_{\NS(X)}$ is not $2$-torsion;
        \item $\NS_\iota(X)$ does not contain a Leech-type lattice;
        \item There are no lattices of the form $L_{r, \delta, a}$ sandwiched between $\iota(L)$ and $\NS(X)$.
    \end{enumerate}    
    We know that all these points correspond to rigid points by our previous discussion, so we only need to show that the corresponding locus is open. Pairs satisfying (1) and (2) are contained in proper closed subsets (due to the finiteness of $2$-elementary lattices and the fact that $L$ is not one). If $K$ contains a Leech-type lattice $\mathcal{L}$, let $\tilde{L}$ be the saturation of $\mathcal{L} \oplus L$ in $\NS(X)$. Since there are only finitely many possible $\mathcal{L}$ and $L$ is fixed, we obtain finitely many possible $\tilde{L}$. It follows that points satisfying (3) are contained in the various images $\mathcal{F}_{\tilde{L}}$ induced by all finitely many (isomorphism classes of) embeddings $L \subset \tilde{L}$, and hence they are contained in finitely many closed proper subsets. Finally, concerning the last point, since $L$ is not $2$-elementary, we see that $r > \mathrm{rank}(L)$. Thus also all the points in (4) are contained in the images of the various $L_{r, \delta, a}$ associated to primitive embeddings $L \hookrightarrow L_{r, \delta, a}$ for $r>1$. This also is a proper closed subset. 
\end{proof}
There is another notion of rigidity that is useful to us:
\begin{defi} \label{defi rigid II}
  Let $S,T$ be lattices of signature $(2_+, -)$ and let $ S \subset T$ be a primitive embedding, and let $K$ be its orthogonal complement. We say that $f$ is rigid if the only isometry of the form $q \oplus \mathrm{Id}_S$ with $q \in \OO_{\Delta}(K)$ which extends to an isometry of $T$ is the identity. 
\end{defi}
For instance, we have:
\begin{lemma}
Let $K = S^\perp$ and assume that $\OO(K) \cong \mu_2 \times W(K)$. Then $S \subset T$ is rigid whenever $T/(S \oplus K)$ is not $2$-torsion. 
\end{lemma}

\section{Maps between moduli spaces}
Consider two lattices $T, S$ of signature type $(2_+, -)$ and let $T \rightarrow S$ be a map that respects the pairing (and therefore, necessarily injective). Then we have natural maps $\mathcal{D}^{\pm}_{T} \rightarrow \mathcal{D}^{\pm}_S$ and $\OO^*(T) \hookrightarrow \OO^*(S)$ from Proposition \ref{prop funct O^*} and, therefore, we have an induced map $\mathcal{M}_T \rightarrow \mathcal{M}_{S}$ which is analytic. The fact that this is also algebraic follows from an application of a result Borel (see Theorem 3.14 \cite{zbMATH05071308}). 

At the level of Hodge structures, the map $\mathcal{K}_T^* \rightarrow \mathcal{K}^*_S$ works as follows. Let $K$ be the orthogonal complement of $T$ in $S$. Then, $S$ is an overlattice of $T \oplus K$, meaning $T \oplus K \subset S$ is of finite index. We recall that an overlattice is determined by an isotropic subgroup $H \subset A_T \oplus A_K = A_{T \oplus K}$, in the sense that $S$ is the preimage of $H$ via the natural quotient $T^\vee \oplus K^\vee \rightarrow A_T \oplus A_K$. We also have a natural isometry $A_S \xrightarrow{\sim} H^\perp/H$ (since $H$ is isotropic, the quotient on the left has a natural induced quadratic form). Given an element $(T', \gamma) \in \mathcal{K}^*(T)$ we thus obtain an isotropic subgroup $H' \subset A_{T'} \oplus A_K$ simply by putting $H' = (\gamma'\oplus \id_{A_T})(H)$. This determines an overlattice $S'$ of $T' \oplus K$ which belongs to the same genus of $S$, since they have the same signature and discriminant form. Moreover, we have isomorphisms $A_S \xrightarrow{\sim} H^\perp/H \cong (H')^\perp/H' \cong A_{S'}$ thus obtaining an element of $\mathcal{K}^*(S)$ - once we endow $S'$ with the Hodge structure induced by the one on $T$ and the trivial one on $K$.

Two maps of lattices $f, f' \colon S \rightarrow$ induce the same map $\mathcal{M}_S \rightarrow \mathcal{M}_T$ if and only if there is $g \in \OO^*(T)$ such that $f = g \circ f'$. This implies that there are only finitely many possible maps associated with some lattice $S$ which embeds primitively into $T$. Similarly, the two maps have the same image if there is $g \in \OO^*(T)$ such that $g(f(S)) = f'(S)$.
\begin{lemma} \label{lemma on induced maps}
    Assume that there is $g \in \OO^*(T)$ such that $g(f(S)) = f'(S)$, and let $f_*, f'_* \colon \mathcal{M}_{T'} \rightarrow \mathcal{M}_T$ be the two maps induced respectively by $f$ and $f'$. Then, there is $q \in \OO(A_{T'})$ such that $f_* = f_*' \circ q$, where we recall that $\OO(A_{T'})$ acts on $\mathcal{M}_{T'}$ as previously noted. 
\end{lemma}
\begin{proof}
    In fact, $f^{-1} \circ g^{-1} \circ f'$ is an isometry $\phi \in \OO(S)$ and $q$ is the unique element of $\OO(A_S)$ which acts as $\phi^{-1}$ on $A_S$.
\end{proof}
From the point of moduli spaces of K3 surfaces, we choose a primitive embedding $S \hookrightarrow \Lambda$ and we let $L$ be its orthogonal complement. We distinguish two cases. 
\subsubsection{Maps induced by primitive embeddings}
Suppose $T \rightarrow S$ is a primitive embedding. Then, we obtain another primitive embedding $T \hookrightarrow \Lambda$ given by the composition $T \rightarrow S \rightarrow \Lambda$. If we denote by $N$ the orthogonal complement of $T$ then we obtain yet another primitive embedding $f \colon L \hookrightarrow N$. Now, all of these primitive embeddings induce identifications $\mathcal{F}_L(\C) \cong \mathcal{K}^*_T$ and $\mathcal{F}_N(\C) \cong \mathcal{K}^*_S$ due to \ref{final diagram comparison}. The proof of the following fact is just a verification that we omit:
\begin{lemma}
   Under these identification, the map $\mathcal{F}_L(\C) \rightarrow \mathcal{F}_N(\C)$ corresponding to $\mathcal{K}_T^* \rightarrow \mathcal{K}^*_S$ sends $(X, \iota)$ to $(X, \iota \circ f)$. 
\end{lemma}
Using this description it is also easy to see that $\mathcal{F}_L \rightarrow \mathcal{F}_N$ is defined over $\Q$: let $\sigma \in G_\Q$ and compute: $$(f_*(X,\iota))^\sigma = (X, \iota \circ f)^\sigma = (X^\sigma, (\iota \circ f)^\sigma ) = (X^\sigma,  \sigma_* \circ \iota \circ f ) = f_*((X, \iota)^\sigma).$$
In general, all the maps induced by maps of lattices are defined over $\Q$. This is also mirrored by the fact that the corresponding Shimura datums have $\Q$ as their reflex field since the lattice is indefinite, and that the maps above are maps of Shimura varieties induced by the embedding $\SO_T \hookrightarrow \SO_S$. 
\begin{rmk}
One can write the Noether-Lefschetz locus as the union of all the images of $\mathcal{M}_S \rightarrow \mathcal{M}_T$ for $S$ varying among all the possible choices. It also suffices to make $S$ vary among the sublattices of $T$ of corank one, thus expressing the Noether-Lefschetz locus as union of divisors. 
\end{rmk}
The next proposition is a consequence of the fact that $\OO(K)$ is universally bounded for definite lattices of a given rank. This is in fact the reason why the main result holds in full generality, i.e., for any modular orthogonal variety. 
\begin{prop} \label{thm univ bound}
For any primitive embedding $T \hookrightarrow S$ and any connected component $\mathcal{M}^0_T$ of $\mathcal{M}_T$, the degree of the induced map $ \mathcal{M}^0_T \rightarrow \mathcal{M}_S$ onto its image can be bounded only in terms of $\mathrm{rank}(S) - \mathrm{rank}(T)$. 
\end{prop}

\begin{proof}
Let $\mathcal{M}^0_S$ be the connected component of $\mathcal{M}_S$ to which $\mathcal{M}^0_T$ is mapped. As explained at the beginning of this section, this can determined via lattice theory: if $\pi_0(\mathcal{M}_T)$ is determined (up to an indeterminacy of at most two) by an isomorphism class of a pair $(T', \gamma') \in \pi_0(\mathcal{K}_T)$, then the construction carried out before above determines another tuple $(S', \gamma_S') \in \pi_0(\mathcal{M}_S)$. Note that $S'$ is constructed as an overlattice of $T' \oplus K$. This means that there is a primitive embedding $T' \hookrightarrow S'$ such that the map $\mathcal{M}^0_T \rightarrow \mathcal{M}^0_S$ is induced by $\mathcal{D}_T^{\pm}/\OO^*(T') \rightarrow \mathcal{D}_S^{\pm}/\OO^*(S').$ 

Let $\OO^*(T)^+ \subset \OO^*(T)$ be the stabilizer of $\mathcal{D}_T^+$. This acts on $\mathcal{D}_T^+$, and we consider the induced map $\mathcal{D}_T^+/ \OO^*(T)^+ \rightarrow \mathcal{D}^+_S/ \OO^*(S)^+$. Let $\mathrm{Stab}^*(T) \coloneqq \{ g \in \OO^*(S) \colon g(T) = T \}$ and write $\Gamma \subset \OO(T)$ for the image of $\mathrm{Stab}^*(T)$ in $\OO(T)$ under the natural restriction map. We have $\OO^*(T) \subset \Gamma$ due to Lemma \ref{lemma stable}. We put $\mathrm{Stab}^*(T)^+ \coloneqq \mathrm{Stab}^*(T,S) \cap \OO^*(S)^+$ and similarly $\Gamma^+ \subset \OO(T)$ for the image of $\mathrm{Stab}^*(T)^+$ in $\OO(T)$. The natural injection $\OO^*(T) \hookrightarrow \OO^*(S)$ respects the induced decomposition $\OO^*(T) \cap \OO_T(\R)^{ \pm , \pm }$. This implies that $\OO^*(T)^+ \subset \mathrm{Stab}^*(T)^+$ and hence that $\OO^*(T)^+ \subset \Gamma^+$ as a normal subgroups. The degree of $\mathcal{D}_T^+/ \OO^*(T)^+ \rightarrow \mathcal{D}^+_S/ \OO^*(S)^+$ is then given by $2^{-1} [ \Gamma^+ \colon \OO^*(T)^+$ or $[ \Gamma^+ \colon \OO^*(T)^+]$ depending on whether $-\mathrm{Id} \in \Gamma  \setminus \OO^*(T)$ or not. Let now $K = T^\perp$ be the orthogonal complement of $T$ in $S$. It follows that $K$ is a definite lattice and therefore that $\OO(K)$ is finite. Moreover, the cardinality of $|\OO(K)|$ can be bounded only in terms of $\mathrm{rank}(K)$, due to the fact that every finite group acting on some $\Z^n$ embeds to $\GL(\FF_{3}^n)$ under the natural quotient map. One has an injection 
$$\mathrm{Stab}^*(T) \hookrightarrow \OO(T) \times \OO(K),$$
and we consider the induced map 
$$\mathrm{Stab}^*(T) \rightarrow \OO(K).$$
An element $g \in \mathrm{Stab}^*(T)$ that maps to the identity of $\OO(K)$ is an element of $\OO^*(S)$ that stabilizes $T$ and that acts trivially on $K$. By embedding $S$ into an even unimodular lattice $U$, we know that $g$ extends to an isometry of $U$ by requiring it to be the identity on the orthogonal complement of $S$. It follows that this isometry preserves $T$ and acts as the identity also on the orthogonal complement of $T$, and using Lemma \ref{lemma stable} this shows that $\ker(\mathrm{Stab}^*(T) \rightarrow \OO(K)) = \OO^*(T)$ and therefore that $[\Gamma \colon \OO^*(T)] $ is universally bounded depending only on $\mathrm{rank}(K)$.
\end{proof}
We also remark that the cardinalities of the preimages of Picard generic points behave as one expects:
\begin{lemma} \label{lemma same preimages}
Let $(T', \gamma) \in \mathcal{K}^*_T$ whose underlying Hodge structure is $\Q$-irreducible (meaning that there are no $(0,0)$-classes) and such that $\Aut_{\Hdg}(T') = \{ \pm 1 \}$. If $f \colon \mathcal{K}^*_T \rightarrow \mathcal{K}^*_S$ induced by a primitive embedding $T \hookrightarrow S$ then $|f^{-1}(f((T', \gamma)))|$ is independent on $(T', \gamma)$.
\end{lemma}
\begin{proof}
    In fact, in this case, the preimages can be computed in purely lattice theoretical terms, independently from the Hodge structure on $T'$. 
\end{proof}
\subsubsection{Maps induced by finite index inclusions} \label{subsection maps induced}
Let $T' \subset T$ be a finite index inclusion. In this case, $T$ is an overlattice of $T'$, which is determined by an isotropic subgroup $H \subset A_{T'}$. Clearly $\mathcal{D}_{T'}^{\pm} = \mathcal{D}_{T}^{\pm} $, and therefore the induced map $\mathcal{M}_{T'} \rightarrow \mathcal{M}_T$ is the quotient map induced by the inclusion $\OO^*(T') \subset \OO^*(T)$. 

One can interpret these maps as follows. Proposition 6.6. in \cite{MR893604} and its proof show that when $T/T'$ is cyclic, the corresponding maps are induced by associating to $(X,\iota)$ some moduli space of stable sheaves on $X$, with a Mukai vector whose $(1,1)$-part can be chosen in $L$ (we refer the reader to the proof for an explicit construction). In general, one can always filter $T' \subset T$ via $T'_i \subset T'_{i+1}$ with $T'_0 = T', T'_n = T$ and $T'_{i+1}/T'_{i}$ is cyclic. This gives a corresponding factorization of the map $\mathcal{M}_{T'} \rightarrow \mathcal{M}_{T}$ all of whose factors can be interpreted as taking $X$ to some moduli spaces of sheaves on $X$. This also shows that the maps are naturally defined over $\Q$. 

Now, we construct the maps from the introduction, which can be considered the building blocks for every other map of this type. Fix a prime $p$ coprime to $\mathrm{discr}(T)$. We begin by classifying sublattices $T' \subset T$ such that $T/ T' \cong \Z/ p$ (see also \cite{MR3616011}). Due to the coprimality of $p$ and $\mathrm{discr}(T)$, we have a natural splitting $A_{T'} \cong A_{T} \oplus A_p$ where $A_p$ is the $p$-part of $A_{T'}$, and moreover $A_p \cong \Z/p \Z \oplus \Z / p \Z$ or $A_p \cong \Z/ p^2 \Z$ necessarily. The first case is part of Kneser's theory of neighboring lattices, and we call it the isotropic case (see \cite{https://doi.org/10.48550/arxiv.2206.02560} for an application to K3 surfaces). In the second case, let $\epsilon$ be a generator of $A_p$ and consider the quadratic form $q \colon A_p \rightarrow \Q_p / \Z_p$. Then $q(\epsilon) = \theta/p^2$ for some $\theta \in \Z_p^{\times}$ (as we shall soon show). Let
\begin{enumerate}
    \item $\mathcal{L}^+_T(p)$, the set of sublattices such that $\theta$ is a square;
    \item $\mathcal{L}^{-}_T(p)$, the set of sublattices such that $\theta$ is not a square;
    \item $\mathcal{L}^0_T(p)$, the set of sublattices corresponding to the isotropic case.
\end{enumerate}
The lattices can be characterized as follows. Consider $T \otimes \mathbf{F}_p$ as a $\mathbf{F}_p$-vector space. Then, the pairing of $T$ induces a perfect pairing on $T \otimes \mathbf{F}_p$ due to the coprimality of $p$ and $\mathrm{discr}(T)$. A subgroup $T' \subset T$ of index $p$ corresponds to a hyperplane $H \subset T \otimes \mathbf{F}_p$, and therefore $\ell \coloneqq H^{\perp}$ is a line in $T \otimes \mathbf{F}_p$ by dimension reason. Let $\epsilon$ be a generator of $\ell$, then
\begin{lemma} In the same notation as before
    \begin{enumerate}
    \item $L' \in \mathcal{L}^{0}_T(p)$ if and only if $(\epsilon, \epsilon) = 0$;
    \item $L' \in \mathcal{L}^{+}_T(p)$ if and only if $(\epsilon, \epsilon) \in \mathbf{F}_p^{\times 2}$;
    \item $L' \in \mathcal{L}^{-}_T(p)$ if and only if $(\epsilon, \epsilon) \in \mathbf{F}_p^{\times} \setminus \mathbf{F}_p^{\times 2}
    $.
\end{enumerate}

\end{lemma}
\begin{proof}
    Assume that $\ell$ is isotropic and let $v \in T$ be a primitive vector that generates it. The next arguments are the same found in Kneser's theory of neighboring lattices. One first shows that one can choose such $v$ with $v^2 \in 2p^2 \Z$. Then, $T' \coloneqq \{ w \in T \colon (w,v) \in p \Z \}$ is the sublattice of index $p$ which corresponds to $\ell^\perp$ From this description, it is also clear that the dual of $T'$ contains the vector $v/p$. Moreover, this vector is isotropic, meaning that it generates a subgroup $H \cong \Z/ p \Z \subset A_p$ which is isotropic, and the overlattice it generates is different from $T$ (it is called a $p$-neighbor of $T$). Thus we have $\Z/p \Z \oplus \Z/ p \Z \cong A_p.$ On the other hand, suppose that $A_p$ is isomorphic to $\Z/p\Z \oplus \Z/ p \Z$ as groups. We know that there is an isotropic subgroup $H \subset A_p$ and we know that the bilinear pairing $A_p \times A_p \rightarrow \Q/ \Z$ is perfect. Thus there must be another unique isotropic subgroup $H'$ such that $H' \cap H = 0$ and therefore $A_p = H \oplus H'$. Then $H'$ corresponds to an everlattice $T''$ of $T'$ of index $p$ (the $p$-neighbor of $T$) and $T'' \cap T = T'$. Consider now $\{ v \in T \colon (v,T') \subset p \Z \}$. This is generated by $v$ and $pT$, but since $v \in T'$, we conclude that $\ell \subset \ell^\perp$. 

    For the second and third case, assume that $A_p \cong \Z/ p^2 \Z$ and let $\epsilon$ be a generator of $A_p$, and put $q(\epsilon) = a \in \Q_p/ \Z_p$. We know that $pA_p$ must be an isotropic subgroup, which implies that $a \in p^{-2} \Z_p$. Assume that $a \in p^{-1} \Z_p$. Then $pA_p$ is not only isotropic, but it pairs to zero with the whole $A_p$, which is impossible since the bilinear pairing on $A_p$ is perfect. So we write $a = \theta /p^2$ for some $\theta \in \Z$ and we pick a vector $v \in (T')^\vee$ such that $(T')^\vee = T' + \Z v$. We can write $v = w/p^2$ for some $w \in T'$, which also satisfies $w/p \in T$. One computes $(v,v) = (w,w)/p^4$. Therefore $(w,w)$ is divisible exactly by $p^2$. This means that $w/p \in T$ is not isotropic mod $p$. Moreover, $T' = \{ v \in T \colon (v,w/p) \in p \Z \}$. Thus the image of $T'$ in $T \otimes \mathbf{F}_p$ corresponds to the orthogonal complement of the line generated by $w/p$, proving both points (2) and (3). 
\end{proof}
We thus identify the various sets $\mathcal{L}^\bullet_T(p)$ with the corresponding sets of lines in $T \otimes \mathbf{F}_p$. 
\begin{prop} \label{prop transitivity action}
Assume that $\mathrm{rank}(T) \geq 5$. Then $\OO^*(T)$ acts transitively on $\mathcal{L}^\bullet_T(p)$ for any odd prime $p$ coprime to $\mathrm{discr}(T)$. 
\end{prop}
\begin{proof}
The proposition amounts to prove that $\OO^*(T)$ acts transitively on the corresponding sets of lines in $T \otimes \mathbf{F}_p$. Let $\mathrm{Spin}_T$ be the spin group associated to $T$, whose generic fiber is simply connected and semisimple, and it is the universal cover of $\SO_{T_\Q}$. Strong approximation \cite[Thm.7.12]{zbMATH00052370} asserts that $\mathrm{Spin}_T(\Q)$ is dense in $\mathrm{Spin}_T(\A_f)$ whenever $T$ is not definite, which implies that $K \cap \mathrm{Spin}_T(\Q)$ is dense in $K$ for any compact-open subgroup $K \subset \mathrm{Spin}_T(\A_f)$. By choosing $K = \mathrm{Spin}_T(\widehat{\Z})$ we see that $\mathrm{Spin}_T(\Z)$ is dense in $\mathrm{Spin}_T(\widehat{\Z})$. Since for $p$ odd and coprime to $\mathrm{discr}(T)$ both the groups $\SO_T$ and $\mathrm{Spin}_T$ have good reduction at $p$, we obtain a natural isomorphism $\SO_T \otimes \FF_p \cong \SO_{T \otimes \FF_p}$ and similarly for $\mathrm{Spin}_T$. A consequence of strong approximation is that $\mathrm{Spin}_T(\Z) \rightarrow \mathrm{Spin}_T(\FF_p)$ is surjective. Over $\FF_p$ we have the spin norm $\mathrm{spn} \colon \OO_T(\FF_p) \rightarrow \mu_2$ whose kernel intersected with $\SO_T(\FF_p)$ coincides with the image of $\mathrm{Spin}_T(\FF_p)$. We recall that if $r \in \OO_T(\FF_p) $ is a reflection associated to a non-isotropic vector $v$, then $\mathrm{spn}(r) = (v,v) \in \FF_p^\times/ \FF_p^{\times 2} = \mu_2$. 

Now, we claim that every element of $\mathcal{L}^{\bullet}_T(p)$ is fixed by an element of $\SO(T \otimes \FF_p)$ of arbitrary spinor norm (this is why we need $\mathrm{rank}(T) \geq 5$). If $\ell$ is isotropic, then $\ell^\perp/ \ell$ is a non-degenerate quadratic space of dimension at least three, hence it represents all the values in $\FF_p$. This implies that we can find $v \in \ell^\perp$ non-isotropic and with $(v,v)$ arbitrary. But then the reflection at $v$ clearly fixes $\ell$ and has arbitrary Spinor norm. Then we can choose an even product of reflections to obtain the desired element. If $\ell$ is non-isotropic, then $\ell^\perp$ is a non-degenerate quadratic space of dimension at least $4$, and we do as before. 

We now show that the image of $\mathrm{Spin}_T(\FF_p)$ acts transitively on each set of lines. Let $\ell, \ell' \in \mathcal{L}_T^\bullet(p)$ and let $g \in \OO(T \otimes \FF_p) $ be an element which sends $\ell$ to $\ell'$. If $\det(g) = 1$ and $\mathrm{spn}(g) =1$, there is nothing to prove. If $\det(g) = 1$ and $\mathrm{spn}(g) = -1$ we choose $r \in \OO(T \otimes \FF_p)$ to be a product of two reflections of different spinor norm which fix $\ell$, so that $r \in \SO(T \otimes \FF_p)$ and $r$ has Spinor norm $-1$. Then also $gr$ sends $\ell$ to $\ell'$ and it belongs to the image of $\SO_T(\FF_p)$. If $\det(g) = -1$ and $\mathrm{spn}(g) = \pm 1$ then we reason analogously. Finally, if $d \in \Z$ is the discriminant of $T$ then one has a natural injection $A_T \subset T/dT$, which is induced by $dT^\vee \subset T$. Due to strong approximation the map $\mathrm{Spin}_T \rightarrow \mathrm{Spin}_T( \mathbf{Z}/ d \mathbf{Z}) \times  \mathrm{Spin}_T( \mathbf{F}_p)$ is surjective. This implies that $\mathrm{Spin}_T^*(\Z)$ (the elements which map to $\SO^*(T)$) surjects onto $\mathrm{Spin}(T \otimes \mathbf{F}_p)$. Finally, this implies that the image of $\OO^*(T)$ in $\OO(T \otimes \FF_p)$ contains the image of $\mathrm{Spin}(T \otimes \FF_p)$, and the proposition is proved. 
\end{proof}
From this and Lemma \ref{lemma on induced maps} it follows that any lattice $T$ and any prime $p$ big enough, there are associated three spaces $\mathcal{M}_T(p)^\bullet$ together with three maps $\pi_p^\bullet \colon \mathcal{M}_T(p)^\bullet \rightarrow \mathcal{M}_T$ which are well-defined up to a $\Q$-automorphisms of $\mathcal{M}_T(p)^\bullet$. The degrees of the maps are greater than the respective $| \mathcal{L}_T(p)^{\bullet} |$. 

We now give enough conditions on $p$ and $T$ under which the map $\OO^*(T) \rightarrow \OO(T \otimes \FF_p)$ is surjective, which will be useful in the next chapter. 
\begin{cor} \label{coro surj OO mod p}
    Assume that $\mathrm{rank}(T) \geq 5$ and that $\mathrm{rank}(T) - 2 \geq \ell(A_T)$. Let $p \equiv 3 \mod 4$ be a prime which is coprime to $\mathrm{discr}(T)$. Then the natural map $\OO(T) \rightarrow \OO(A_T) \times \OO(T \otimes \FF_p)$ is surjective. 
\end{cor}
\begin{proof}
Since $\mathrm{rank}(T) - 2 \geq \ell(A_T)$ the lattice $T$ contains primitive classes $\delta, \delta'$ of square $-2$ and $2$ respectively, as an application of Theorem 1.12.2 of \cite{MR525944}. We know by Proposition \ref{prop transitivity action} that the image of $\OO^*(T)$ mod $p$ contains the image of $\mathrm{Spin}_T(\FF_p)$, but since it also contains the reflections $r,r'$ associated to $\delta$ and $\delta'$ respectively, we conclude that the image is the whole $\OO_T(\FF_p)$ (because $-1$ is not a square mod $p$ and hence $\mathrm{spn}(r) \neq \mathrm{spn}(r')$). The rest follows from the surjectivity of $\OO(T) \rightarrow \OO(A_T)$ which is automatic when $\mathrm{rank}(T) - 2 \geq \ell(A_T)$. 
\end{proof}
\begin{rmks} \label{remark field of defi moduli spaces}
\begin{enumerate}
    \item Let $T$ be a lattice of signature $(2_+, n_-)$ which embeds primitively into $\Lambda$. Then it is not true that any sublattice $S \subset T$ of finite index embeds primitively into the K3 lattice, and in fact, this is false as soon as $\mathrm{rank}(T) \geq 12$ (but it is always true when $\mathrm{rank}(T) \leq 10$). For simplicity, assume that the index of $S \subset T$ is coprime to $\mathrm{discr}(T)$, which implies that we have a natural decomposition $A_{S}  = A_T \perp H_{S}$ and $\OO(A_{T'}) = \OO(A_T) \times \OO(H_S)$. Note that $H_{S}$ is an isotropic subgroup that corresponds to the overlattice $T$. Now, for any pair $(S', \gamma') \in \mathcal{K}^*_{S}$ the isotropic subgroup $H_{S}$ can be considered as an isotropic subgroup of $A_{S'}$ using the map $\gamma'$. Thus, $S'$ can be naturally considered as a sublattice of some element of $\mathcal{K}^*_T$. Under our primality condition, this means that $\mathcal{K}^*_{T'}$ parametrizes triples $((T',\gamma'), S', \gamma'')$ where $(T', \gamma') \in \mathcal{K}^*_T$ and $S' \subset T'$ is a finite-index sublattice of index coprime to $\mathrm{discr}(T)$, and $\gamma''$ is an isomorphism $H_{S} \cong H_{S'}$, so that $(S', \gamma' \oplus \gamma'') \in \mathcal{K}_S$. Using this description, one can define a natural $G_\Q$ action on $\mathcal{M}_{T'}$ whenever $T'$ has an overlattice which embeds primitively into the K3 lattice, and then show that $\mathcal{M}_{T'}$ has a natural model over $\Q$ as in Theorem \ref{thm defined Q}. This is again mirrored by the fact that for any lattice $T$ with $n_- > 0$ the corresponding Shimura varieties have reflex field $\Q$.
    \item Let $S \subset T$ be a finite index inclusion and assume that $T$ embeds into the K3 lattice with orthogonal complement $L$. If $(X, \iota)$ is a $L$-polarized K3 surface defined over a number field $K$ and $\pi \colon \mathcal{M}_S \rightarrow \mathcal{M}_T$ is the map induced by $S \subset T$, then a $K$-rational preimage of $\pi$ of the moduli point corresponding to $(X, \iota)$ yields a sublattice $S' \subset T(X_\C)$ in the same genus of $S$, and such that $S \otimes \widehat{\Z} \subset T(X_\C) \otimes \widehat{\Z}$ is invariant under $G_K$ and $G_K$ acts trivially on $A_S$. On the other hand, each such sublattice yields a $K$-rational preimage by construction, well-defined up to the action of $\OO(A_{T'})$.
\end{enumerate}
\end{rmks}
\section{Proofs} \label{section proof}
The proof below works for any lattice $T$, without any further assumption. We remark that if $\mathrm{rank}(T) \geq 5$ and $\mathrm{rank}(T) - 3 \geq \ell(A_T)$, then every $T' 
\in \mathcal{L}^{\bullet}_T(p)$ satisfy automatically $\mathrm{rank}(T') - 3 \geq \ell(A_{T'})$, so that the spaces $\mathcal{M}_T^\bullet(p)$ are geometrically irreducible.
\begin{proof}[Proof of Theorem \ref{main thm}]
    Consider a prime $p$ coprime to $\mathrm{discr}(T)$. Pick a rational point $x \in \mathcal{M}_T(K)$ and assume that it belongs to the Noether-Lefschetz locus. Then, we can find a primitive embedding $S \hookrightarrow T$ such that $x$ is the image of a Picard generic point of $\mathcal{M}_S$ under the induced map on moduli spaces. Pick any lattice $T' \subset T$ belonging to some $\mathcal{L}_T^\bullet(p)$ such that $S \cap T' = S$. Then also $S \hookrightarrow T'$ is a primitive embedding, and we obtain a commutative diagram of morphisms of $\Q$-varieties
     \begin{equation} \label{triangle}
\begin{tikzcd}[row sep=tiny]
                         & \mathcal{M}_{T}^{\bullet}(p) \arrow[dd, "\pi^{\bullet}_p"]             \\
\mathcal{M}_S \arrow[ur,] \arrow[dr,] & \\ & \mathcal{M}_T
\end{tikzcd}
\end{equation}
Now, we can assume that $x$ is rigid in the sense of Definition \ref{defi rigid}. This implies that $(X, \iota)$ has a model over $K$. If we look at the Galois action on $\NS(\overline{X})$, then this is the identity on $\iota(L)$, and on $\NS_\iota(\overline{X})$ it must factorize through the inclusion $\OO_\Delta(\NS_\iota(\overline{X})) \subset \OO(\NS_\iota(\overline{X}))$, where we choose the decomposition of $\Delta(X)$ induced by the effective classes. Thus, by passing to the extension $K'$ that kills the Galois action, we obtain a rational point in $\mathcal{M}_S$, whose image in $\mathcal{M}_T^{\bullet}(p)$ gives us the rational preimage we were after. Moreover, if $L \subset \NS(X)$ is also rigid in the sense of Definition \ref{defi rigid II}, then we must have that $G_K$ acts as the identity on $\NS_{\iota}(X)$, so we can choose $K' = K$. 

To prove point (2), we assume that $\mathrm{rank}(T) \geq 4$. Assume that $x$ is Picard generic and consider the Galois representation $\rho_x \colon G_K \rightarrow \OO_T(\widehat{\Z})$ associated with $X$ (where we identify $T$ with $T(X_\C)$). Then the main result in \cite{cadoret_moonen} shows that the image of $\rho_x$ is a compact-open subgroup for the profinite topology and, in particular, that there is an integer $N$ such that for each $p > N$ the $p$-component $\rho_{x,p} \colon G_K \rightarrow \OO_T(\Z_p)$ is surjective. But then $G_K$ acts transitively on the various sets $\mathcal{L}_T^\bullet(p)$, showing that if $x'$ is a $K'$-rational preimage of $x$ under $\pi_p^\bullet$, then $[K' \colon K] \geq | \mathcal{L}_T^\bullet(p)|$. Since $| \mathcal{L}_T^\bullet(p)|$ is unbounded as $p$ varies, this concludes the proof. 
\end{proof} 
Now we explicitly describe the unique rational preimage which we obtain when $\rho(X) = \mathrm{rank}(L) + 1$. Recall that if $T' \subset T$ is an overlattice, then $T/T' \subset A_{T'}$ is an isotropic subgroup and, moreover, $\Hom(T/T', \Q/\Z) \cong (T')^\vee/T^\vee$ is a quotient of $A_{T'}$. Next, for any lattice $T$, we write $T_p$ for $T \otimes \Z_p$ considered as a $p$-adic lattice. Note that $A_{T_p} \cong A_T \otimes \Z_p $ naturally. Assume that $(X, \iota)$ is defined over a number field $K$ and that $\rho(\bar{X}) = \mathrm{rank}(L) + 1$. Then there is a unique sublattice $T' \in \mathcal{L}_T^\bullet(p)$ that contains $T(X)$. We assume that we have already enlarged $K$ so that $G_K$ acts trivially on $\NS(\bar{X})$, and due to the discussion after Remark \ref{remark field of defi moduli spaces} we only need to show that $G_K$ acts trivially on $A_{T'}$ under the induced action (and in particular that stabilizes $T'$). In what follows, we are tacitly working after having base-changed $X$ to $\C$, bearing in mind that all the groups involved have a natural $G_K$-action as well. 
\begin{itemize}
    \item Suppose that both $\NS_\iota(X)$ and $T(X)$ have discriminants coprime to $p$. Then $T(X) \otimes \mathbf{F}_p \subset T \otimes \mathbf{F}_p $ is non-degenerate and $\NS_\iota(X) \otimes \mathbf{F}_p  \subset \otimes \mathbf{F}_p $ is a line $\ell$ which is its orthogonal complement. We let $T'$ be the unique sublattice of index $p$ of $T$ such that $T' \cap \NS_\iota(X) = p \NS_\iota(X)$. This is also given by $\{ w \in T \colon (w, \NS_\iota(X)) \in p\Z \}$. But then $A_{T'} = A_T \oplus A_{pK} \cong A_T \oplus \NS_\iota(X)/p^2/p^2$. Now, $G_K$ acts trivially on $A_T$ since $(X, \iota)$ is defined over $K$ and $G_K$ acts trivially on $\NS_\iota(\overline{X}) /p^2 \NS_\iota(\overline{X}) $ since it acts trivially on $\NS(\overline{X})$, so this clearly gives us a rational preimage under $\pi_p^\pm$. Note that all other lines of $T \otimes \mathbf{F}_p$ can be generated by $(v,w)$ with $v \in T(X) \otimes \mathbf{F}_p$ and $w \in \NS_\iota(X) \otimes \mathbf{F}_p$. Since $\Br(\overline{X})[p]  \cong T(X) \otimes \mathbf{F}_p$ in this case, one can deduce that only the sublattice above is fixed a priori by $G_{K}$. 
\item If instead $T(X)$ has discriminant divisible by $p$, consider again in $T \otimes \mathbf{F}_p$ the hyperplane $T(X) \otimes \FF_p$ and the line $\ell = \NS_\iota(X) \otimes \FF_p$. Then $\ell$ is isotropic and orthogonal to $T(X) \otimes \mathbf{F}_p$, so that $\ell = (T(X)\otimes \mathbf{F}_p)^{\perp}$ necessarily. As before, we put $T' = \{ w \in T \colon (w, \NS_\iota(X)) \subset p \Z \}$. This contains both $\NS_\iota(X)$ and $T(X)$ as primitive sublattices, giving us a chain of inclusions $T(X) \oplus \NS_\iota(X) \subset T' \subset T$. Thus, $T'$ is given by an isotropic subgroup $H' \subset A_{T(X)} \oplus A_{\NS_\iota(X)}$. Now, $G_{K}$ acts trivially on $A_{T(X)} \oplus A_{\NS_\iota(X)}$, and hence also on its subquotient $A_{T'} \cong H'^{\perp}/H'$.
\end{itemize}
\begin{rmk} 
In fact, the number of rational preimages is related to the Picard number of $(X, \iota)$. For simplicity, assume that $\mathrm{discr}(T(X))$ is not divisible by $p$. Let $T$ be the saturation of $T(X) \oplus \NS_{\iota}(X)$. Then the image of $T(X)$ in $T \otimes \FF_p$ is non-degenerate with orthogonal complement $\NS_{\iota} \otimes \FF_p$. If $\NS(\overline{X})$ is defined over $K$, each line in $\NS_{\iota}(X) \otimes \FF_p $ gives a rational preimage under some $\pi_p^{\bullet}$. But non-degenerate quadratic spaces can be only of two types over finite fields of odd characteristic, and one has formulas for the number of isotropic/non-isotropic lines. Thus, one can in principle guess the Picard number simply by counting the rational preimages of $\pi_p^{\bullet}$.
\end{rmk}
\begin{proof}[Proof of Corollary \ref{cor implication}]
Choose a cover $\tilde{\mathcal{M}}_T \rightarrow \mathcal{M}_T$ of degree $d$ that supports a family of $L$-quasipolarized K3 surfaces and let $M = \mathrm{max}\{ | \OO_\Delta(\NS_\iota(X))| \}$ for all possible $(X, \iota) \in \mathcal{F}_L(\C)$. Define now $C = C(L,N)$ as $\mathrm{max} \{ |\Br(\overline{X})^{G_K}|\}$ where $(X,\iota)$ varies over all K3 surfaces defined over any extension $K/ \Q$ of degree $[K \colon \Q] \leq N \cdot M \cdot d$ and such that $\iota \colon L \xrightarrow{\sim} \NS(X_\C)$. This maximum exists because we are assuming V\'{a}rilly-Alvarado conjecture. 

Let now $K/\Q$ be any extension of degree $[K \colon \Q] \leq N$ and choose $p > C$. Let $x \in \mathcal{M}_T(K)$ be Picard generic. Then there is an extension $K'/K$ of degree $[K' \colon K] \leq d$ and a $L$-quasipolarized K3 surface $(X, \iota)/K'$ representing $x$. Let $K''/K'$ be an extension of degree $[K'' \colon K'] \leq M$, and let $y$ be a $K''$-rational point in some fiber of one of the maps $\pi_p^\bullet$. This corresponds to an index $p$ subgroup $T' \subset T(X_\C)$ such that $T' \otimes \widehat{\Z}$ is invariant by $G_K$ and such that the induced action on $A_{T'}$ is trivial. But any such subgroup is induced by a unique cyclic subgroup $C \subset \Br(\overline{X})[p]$. Moreover, $C \cong (T')^\vee/T^\vee$ naturally, which implies that $C$ is a quotient of $A_{T'}$. Since $G_{K''}$ acts trivially on $A_{T'}$, we deduce that $p$ divides $|\Br(\overline{X})^{G_{K''}}|$. But this is impossible since $[K'' \colon K] \leq N \cdot M \cdot d$, which contradicts the fact that $x$ is Picard generic. 
\end{proof}

\subsection{Consequences of Bombieri-Lang} We begin by proving point (1) of Theorem \ref{cor shafa}. 
\begin{proof}
By \cite[Thm.1.3]{MR3802299}, we know that in this case the (components of the) space $\mathcal{M}_T(p)^{\bullet}$ are of general type for almost all $p$. This implies that $\mathcal{M}_T(p)^\bullet(K)$ is not Zariski dense for every number field $K$ under the Bombieri-Lang conjecture, and then we conclude by point (2) of Theorem \ref{main thm}.
\end{proof}
To prove point (2), we slightly generalize the ideas behind Theorem \ref{main thm} and use the following result:
\begin{thm}(\cite[Thm.1.1.]{MR3802299}) \label{thm ma}
Up to scaling, there are only finitely many lattices $T$ of signature $(2,17)$ such that $\mathcal{D}^{\pm}_T/ \OO(T)$ is not of general type.
\end{thm}
Note that the quotient here is by the full orthogonal group. Let $p$ be a prime number as in Corollary \ref{coro surj OO mod p} and consider the surjective map $\OO(T) \rightarrow \OO(A_T) \times \OO_T(\FF_p)$. For any sublattice $pT \subset T' \subset T$, let $\mathrm{Stab}^*(T')$ be its stabilizer in $\OO^*(T)$ and $\mathrm{Stab}^*(T' \otimes \widehat{\Z}) \subset \OO^*_T(\widehat{\Z})$ be the stabilizer of $T' \otimes \widehat{\Z}$ in $T \otimes \widehat{\Z}$. Then we have natural inclusions 
\begin{equation} \label{eq incl}
\OO^*(T') \subset \mathrm{Stab}^*(T') \subset \OO^*(T) \text{ , } \OO^*(T') \subset \mathrm{Stab}^*(T') \subset \OO(T')\end{equation}
and similarly for their adelic counterparts. Now, consider the varieties $$\mathcal{M}_{T/T'} \coloneqq \OO_{T'}(\Q) \backslash \mathcal{D}_{T'}^{\pm} \times \OO_{T'}(\A_f ) / \mathrm{Stab}^*(T' \otimes \widehat{\Z})$$
and 
$$\tilde{\mathcal{M}}_{T'} \coloneqq \OO_{T'}(\Q) \backslash \mathcal{D}_{T'}^{\pm} \times \OO_{T'}(\A_f) / \OO_{T'}(\widehat{\Z}).$$
Then we have the coverings $\mathcal{M}_{T'} \rightarrow \mathcal{M}_{T/T'} \rightarrow \mathcal{M}_T$ and $\mathcal{M}_{T'} \rightarrow \mathcal{M}_{T/T'} \rightarrow \tilde{\mathcal{M}}_{T'}$ induced by \ref{eq incl}.
Note that both $\mathcal{M}_{T/T'}$ and $\tilde{\mathcal{M}}_{T'}$ are quotients of $\mathcal{M}_{T'}$ by subgroups of $\OO(A_{T'})$. 
\begin{cor}
    If $p$ is big enough and $pT \subsetneq T' \subsetneq T$ then the components of $\tilde{\mathcal{M}}_{T'}$ are all of general type.
\end{cor}
\begin{proof}
In fact, each connected component dominates a variety of the form $\mathcal{D}^+_{T''}/ \OO(T'')^+$ for some lattice $T''$ in the same genus as $T'$. But then we use Theorem \ref{thm ma} and choose $p$ big enough so that, for any lattice $S$ whose quadratic form is primitive (i.e., the value group of $S$ is $\Z$) and of rank $19$ with $p \leq \mathrm{discr}(T)$, we have that $\mathcal{D}_{T}/ \OO(T)^+$ is of general type. Then since $p$ clearly divides the discriminant of $T'$ whenver $T' \neq T$, we conclude. 
\end{proof}
Now we prove point (2) of Corollary \ref{cor shafa}. Let $x \in U(K)$ be a rational point of $\mathcal{M}_{T}$ which belongs to the Noether-Lefschetz locus, and let $(X, \iota)/K$ be the corresponding K3 surface. We can view $T(X_\C) \subset T$ as a proper primitive sublattice. But then the Galois representation associated with $X$ must stabilize $T(X_\C) \otimes \widehat{\Z}$, and therefore it also stabilizes $T(X_\C) \otimes \FF_p \subset T \otimes \FF_p$. But if we let $pT \subset T' \subset T$ be the sublattice corresponding to the subspace $T(X_\C) \otimes \FF_p$, then $pT \subsetneq T' \subsetneq T$ and $G_K$ must also stabilize $T' \otimes \hZ \subset T \otimes \hZ$. This shows that there is a $K$-rational point $x' \in \mathcal{M}_{T/T'}(K)$ which maps to $x$ via the covering $\mathcal{M}_{T/T'} \rightarrow \mathcal{M}_{T}.$ Since each component of $\mathcal{M}_{T/T'}$ is of general type and since there are only finitely many sublattices between $pT$ and $T$, we conclude the proof thanks to the Bombieri-Lang conjecture. 

\subsection{Final remarks}
The following conjecture of Shafarevich implies that $\mathcal{F}_L(K)$ is never Zariski-dense:
\begin{conj} \label{sha conj}
    Let $N>0$ be an integer. Then, there are only finitely many lattices that can appear as $\NS(\overline{X})$ where $X/K$ is a K3 surface and $[K \colon \Q] \leq N$. 
\end{conj}
To attempt to deduce this from the Bombieri-Lang conjecture and the ideas above, one has first to answer the following highly non-trivial question:
\begin{question} \label{question mix}
Is it true that the positive dimensional components of the Zariski closure of $\mathcal{F}_L^{nl}(K)$ are contained in $\mathcal{F}_{L}^{nl}$?
\end{question}
Finally, there is an analogue of Conjecture \ref{sha conj} for abelian varieties proposed by Coleman (unpublished) which is known to imply many uniform bounds for the arithmetic of abelian varieties and K3 surfaces (see \cite{MR3815154} and \cite{wrap131525}). 

\bibliography{Bibliografia}

\begin{thebibliography}{MSTVA17}

\bibitem[AE21]{https://doi.org/10.48550/arxiv.2101.12186}
Valery Alexeev and Philip Engel.
\newblock Compact moduli of {K}3 surfaces.
\newblock \url{https://arxiv.org/abs/2101.12186}, 2021.

\bibitem[And96]{PMIHES_1996__83__5_0}
Yves Andr\'{e}.
\newblock Pour une th\'{e}orie inconditionnelle des motifs.
\newblock {\em Publications Math\'{e}matiques de l'{IH}\'{E}{S}}, 83:5--49,
  1996.

\bibitem[Bal19]{MR3951650}
Gregorio Baldi.
\newblock Local to global principle for the moduli space of {K}3 surfaces.
\newblock {\em Arch. Math. (Basel)}, 112(6):599--613, 2019.

\bibitem[Cad22]{CadoretPreprint}
Anna Cadoret.
\newblock Degeneration locus of (motivic) $\mathbf{Q}_p$-local systems:
  conjectures.
\newblock \url{https://webusers.imj-prg.fr/~anna.cadoret/Bas.pdf}, 2022.

\bibitem[Cas78]{zbMATH03613145}
J.~W.~S. Cassels.
\newblock {\em Rational quadratic forms}, volume~13 of {\em Lond. Math. Soc.
  Monogr.}
\newblock Academic Press, London, 1978.

\bibitem[CC20]{zbMATH07262975}
Anna Cadoret and Fran{\c{c}}ois Charles.
\newblock A remark on uniform boundedness for {Brauer} groups.
\newblock {\em Algebr. Geom.}, 7(5):512--522, 2020.

\bibitem[CFTTV18]{zbMATH06898536}
Victoria Cantoral-Farf{\'a}n, Yunqing Tang, Sho Tanimoto, and Erik Visse.
\newblock Effective bounds for {Brauer} groups of {Kummer} surfaces over number
  fields.
\newblock {\em J. Lond. Math. Soc., II. Ser.}, 97(3):353--376, 2018.

\bibitem[Cha14]{zbMATH06322070}
Fran{\c{c}}ois Charles.
\newblock On the {Picard} number of {{\(K3\)}} surfaces over number fields.
\newblock {\em Algebra Number Theory}, 8(1):1--17, 2014.

\bibitem[CM20]{cadoret_moonen}
Anna Cadoret and Ben Moonen.
\newblock Integral and adelic aspects of the {M}umford-{T}ate conjecture.
\newblock {\em J. Inst. Math. Jussieu}, 19(3):869--890, 2020.

\bibitem[CT13]{zbMATH06218379}
Anna Cadoret and Akio Tamagawa.
\newblock A uniform open image theorem for {{\(\ell\)}}-adic representations.
  {II}.
\newblock {\em Duke Math. J.}, 162(12):2301--2344, 2013.

\bibitem[{Dol}96]{zbMATH00953753}
I.~V. {Dolgachev}.
\newblock {Mirror symmetry for lattice polarized \(K3\) surfaces}.
\newblock {\em {J. Math. Sci., New York}}, 81(3):2599--2630, 1996.

\bibitem[EJ08]{zbMATH05279288}
Andreas-Stephan Elsenhans and J{\"o}rg Jahnel.
\newblock {{\(K3\)}} surfaces of {Picard} rank one and degree two.
\newblock In {\em Algorithmic number theory. 8th international symposium,
  ANTS-VIII Banff, Canada, May 17--22, 2008 Proceedings}, pages 212--225.
  Berlin: Springer, 2008.

\bibitem[Fal84]{MR791585}
G.~Faltings.
\newblock Arithmetic varieties and rigidity.
\newblock In {\em Seminar on number theory, {P}aris 1982--83 ({P}aris,
  1982/1983)}, volume~51 of {\em Progr. Math.}, pages 63--77. Birkh\"{a}user
  Boston, Boston, MA, 1984.

\bibitem[FHM23]{MEZFOR}
Mauro Fortuna, Michael Hoff, and Giacomo Mezzedimi.
\newblock Unirational moduli spaces of some elliptic {K3} surfaces.
\newblock {\em Manuscripta Mathematica}, 2023.

\bibitem[FM21]{zbMATH07390638}
Mauro {Fortuna} and Giacomo {Mezzedimi}.
\newblock {The Kodaira dimension of some moduli spaces of elliptic \(K3\)
  surfaces}.
\newblock {\em {J. Lond. Math. Soc., II. Ser.}}, 104(1):269--294, 2021.

\bibitem[FV21]{MR4297179}
Gavril Farkas and Alessandro Verra.
\newblock The unirationality of the moduli space of {$K3$} surfaces of genus
  22.
\newblock {\em Math. Ann.}, 380(3-4):953--973, 2021.

\bibitem[GHS07]{zbMATH05199718}
V.~A. {Gritsenko}, K.~{Hulek}, and G.~K. {Sankaran}.
\newblock {The Kodaira dimension of the moduli of \(K3\) surfaces}.
\newblock {\em {Invent. Math.}}, 169(3):519--567, 2007.

\bibitem[HKT13]{zbMATH06149392}
Brendan Hassett, Andrew Kresch, and Yuri Tschinkel.
\newblock Effective computation of {Picard} groups and {Brauer}-{Manin}
  obstructions of degree two {{\(K3\)}} surfaces over number fields.
\newblock {\em Rend. Circ. Mat. Palermo (2)}, 62(1):137--151, 2013.

\bibitem[Kne56]{zbMATH03119392}
Martin Kneser.
\newblock Klassenzahlen indefiniter quadratischer {Formen} in drei oder mehr
  {Ver{\"a}nderlichen}.
\newblock {\em Arch. Math.}, 7:323--332, 1956.

\bibitem[Kon93]{MR1255698}
Shigeyuki Kond\={o}.
\newblock On the {K}odaira dimension of the moduli space of {$K3$} surfaces.
\newblock {\em Compositio Math.}, 89(3):251--299, 1993.

\bibitem[Ma15]{MR3275655}
Shouhei Ma.
\newblock Rationality of the moduli spaces of 2-elementary {$K3$} surfaces.
\newblock {\em J. Algebraic Geom.}, 24(1):81--158, 2015.

\bibitem[Ma18]{MR3802299}
Shouhei Ma.
\newblock On the {K}odaira dimension of orthogonal modular varieties.
\newblock {\em Invent. Math.}, 212(3):859--911, 2018.

\bibitem[Mil05a]{MR2192012}
J.~S. Milne.
\newblock Introduction to {S}himura varieties.
\newblock In {\em Harmonic analysis, the trace formula, and {S}himura
  varieties}, volume~4 of {\em Clay Math. Proc.}, pages 265--378. Amer. Math.
  Soc., Providence, RI, 2005.

\bibitem[Mil05b]{zbMATH05071308}
J.~S. Milne.
\newblock Introduction to {Shimura} varieties.
\newblock In {\em Harmonic analysis, the trace formula, and Shimura varieties.
  Proceedings of the Clay Mathematics Institute 2003 summer school, Toronto,
  Canada, June 2--27, 2003}, pages 265--378. Providence, RI: American
  Mathematical Society (AMS), 2005.

\bibitem[MSTVA17]{MR3616011}
Kelly McKinnie, Justin Sawon, Sho Tanimoto, and Anthony V\'{a}rilly-Alvarado.
\newblock Brauer groups on {K}3 surfaces and arithmetic applications.
\newblock In {\em Brauer groups and obstruction problems}, volume 320 of {\em
  Progr. Math.}, pages 177--218. Birkh\"{a}user/Springer, Cham, 2017.

\bibitem[Muk87]{MR893604}
S.~Mukai.
\newblock On the moduli space of bundles on {$K3$} surfaces. {I}.
\newblock In {\em Vector bundles on algebraic varieties ({B}ombay, 1984)},
  volume~11 of {\em Tata Inst. Fund. Res. Stud. Math.}, pages 341--413. Tata
  Inst. Fund. Res., Bombay, 1987.

\bibitem[Muk92a]{zbMATH00125557}
Shigeru Mukai.
\newblock Polarized {K3} surfaces of genus 18 and 20.
\newblock In {\em Complex projective geometry. Selected papers from the
  conference on projective varieties, 19-24 June 1989 in Trieste, Italy, and
  the conference on vector bundles and special projective embeddings, 3-16 July
  1989 in Bergen, Norway}, pages 264--276. Cambridge: Cambridge University
  Press, 1992.

\bibitem[Muk92b]{MR1201388}
Shigeru Mukai.
\newblock Polarized {$K3$} surfaces of genus {$18$} and {$20$}.
\newblock In {\em Complex projective geometry ({T}rieste, 1989/{B}ergen,
  1989)}, volume 179 of {\em London Math. Soc. Lecture Note Ser.}, pages
  264--276. Cambridge Univ. Press, Cambridge, 1992.

\bibitem[Muk96]{MR1397987}
Shigeru Mukai.
\newblock Curves and {$K3$} surfaces of genus eleven.
\newblock In {\em Moduli of vector bundles ({S}anda, 1994; {K}yoto, 1994)},
  volume 179 of {\em Lecture Notes in Pure and Appl. Math.}, pages 189--197.
  Dekker, New York, 1996.

\bibitem[New16]{Rachel}
Rachel Newton.
\newblock Transcendental {B}rauer groups of products of {CM} elliptic curves.
\newblock {\em Journal of the London Mathematical Society. Second Series},
  93(2):397--419, 2016.

\bibitem[Nik79a]{MR544937}
V.~V. Nikulin.
\newblock Finite groups of automorphisms of {K}\"{a}hlerian {$K3$} surfaces.
\newblock {\em Trudy Moskov. Mat. Obshch.}, 38:75--137, 1979.

\bibitem[Nik79b]{MR525944}
Viacheslav Nikulin.
\newblock Integer symmetric bilinear forms and some of their geometric
  applications.
\newblock {\em Izvestiya Akademii Nauk SSSR. Seriya Matematicheskaya},
  43(1):111--177, 238, 1979.

\bibitem[OSZ19]{wrap131525}
Martin Orr, Alexei~N. Skorobogatov, and Yuri~G. Zarhin.
\newblock On uniformity conjectures for {A}belian varieties and {K}3 surfaces.
\newblock {\em American Journal of Mathematics}, October 2019.

\bibitem[Pet17]{MR3656469}
Chris Peters.
\newblock Rigidity of spreadings and fields of definition.
\newblock {\em EMS Surv. Math. Sci.}, 4(1):77--100, 2017.

\bibitem[Poo17]{poonen2017rational}
Bjorn Poonen.
\newblock {\em Rational {P}oints on {V}arieties}.
\newblock Graduate Studies in Mathematics. American Mathematical Society, 2017.

\bibitem[PR94]{zbMATH00052370}
Vladimir {Platonov} and Andrei {Rapinchuk}.
\newblock {\em {Algebraic groups and number theory. Transl. from the Russian by
  Rachel Rowen}}.
\newblock Boston, MA: Academic Press, 1994.

\bibitem[R\'18]{MR3815154}
Ga\"{e}l R\'{e}mond.
\newblock Conjectures uniformes sur les vari\'{e}t\'{e}s ab\'{e}liennes.
\newblock {\em Q. J. Math.}, 69(2):459--486, 2018.

\bibitem[{Riz}05]{2005math......8018R}
Jordan {Rizov}.
\newblock {Complex Multiplication for K3 Surfaces}.
\newblock {\em arXiv Mathematics e-prints}, page math/0508018, Jul 2005.

\bibitem[Riz06]{zbMATH05518937}
Jordan Rizov.
\newblock Moduli stacks of polarized {{\(K3\)}} surfaces in mixed
  characteristic.
\newblock {\em Serdica Math. J.}, 32(2-3):131--178, 2006.

\bibitem[Sch10]{MR2602669}
Matthias Sch\"{u}tt.
\newblock {$K3$} surfaces with {P}icard rank 20.
\newblock {\em Algebra \& Number Theory}, 4(3):335--356, 2010.

\bibitem[Ser89]{zbMATH00042767}
Jean-Pierre Serre.
\newblock {\em Lectures on the {Mordell}-{Weil} theorem. {Transl}. and ed. by
  {Martin} {Brown} from notes by {Michel} {Waldschmidt}}, volume E15 of {\em
  Aspects Math.}
\newblock Braunschweig etc.: Friedr. Vieweg \&| Sohn, 1989.

\bibitem[SZ12]{MR2920883}
Alexei~N. Skorobogatov and Yuri~G. Zarhin.
\newblock The {B}rauer group of {K}ummer surfaces and torsion of elliptic
  curves.
\newblock {\em Journal f\"{u}r die Reine und Angewandte Mathematik. [Crelle's
  Journal]}, 666:115--140, 2012.

\bibitem[Tod80]{MR592693}
Andrei~N. Todorov.
\newblock Applications of the {K}\"{a}hler-{E}instein-{C}alabi-{Y}au metric to
  moduli of {$K3$} surfaces.
\newblock {\em Invent. Math.}, 61(3):251--265, 1980.

\bibitem[VA17]{MR3644254}
Anthony V\'{a}rilly-Alvarado.
\newblock Arithmetic of {K}3 surfaces.
\newblock In {\em Geometry over nonclosed fields}, Simons Symp., pages
  197--248. Springer, Cham, 2017.

\bibitem[Val21]{VALLONI2021107772}
Domenico Valloni.
\newblock Complex multiplication and {B}rauer groups of {K3} surfaces.
\newblock {\em Advances in Mathematics}, 385:107772, 2021.

\bibitem[Val22a]{valloni2019fields}
Domenico Valloni.
\newblock Fields of definition of {K3} surfaces with complex multiplication.
\newblock {\em Journal of Number Theory}, 2022.

\bibitem[Val22b]{https://doi.org/10.48550/arxiv.2206.02560}
Domenico Valloni.
\newblock Neighbors and arithmetic of isogenous {K3} surfaces.
\newblock \url{https://arxiv.org/abs/2206.02560}, 2022.
\newblock Submitted.

\bibitem[VAV17]{MR3731278}
Anthony V\'{a}rilly-Alvarado and Bianca Viray.
\newblock Abelian {$n$}-division fields of elliptic curves and {B}rauer groups
  of product {K}ummer \& abelian surfaces.
\newblock {\em Forum of Mathematics. Sigma}, 5:e26, 42, 2017.

\bibitem[vL07]{vanluijk2007}
Ronald van Luijk.
\newblock K3 surfaces with picard number one and infinitely many rational
  points.
\newblock {\em Algebra and Number Theory}, 1(1):1--15, 2007.

\end{thebibliography}
\bibliographystyle{alpha}

 \end{document}